\documentclass[10pt]{amsart}

\usepackage{amsfonts}
\usepackage{amssymb}
\usepackage{amsmath, amsthm, latexsym}
\usepackage[all]{xy}
\usepackage{hyperref}
\usepackage{graphicx, color}
\usepackage[margin=1.5in]{geometry}
\usepackage{tikz, tikz-cd}
\usetikzlibrary{calc}

\def \C{\mathbb{C}}
\def \Z{\mathbb{Z}}
\def \R{\mathbb{R}}

\def \E{\mathcal{E}}

\def \G{\mathbb{G}}
\def \v{{\bf v}}

\def \k{{\bf k}}

\def \B{\mathfrak{B}}
\def \L{\mathcal{L}}
\def \P{\mathcal{P}}

\def \Spec{\operatorname{Spec}}

\def \SL{\operatorname{SL}}
\def \GL{\operatorname{GL}}
\def \End{\operatorname{End}}

\def \Lie{\operatorname{Lie}}

\def \Ad{\operatorname{Ad}}
\def \diag{\operatorname{diag}}

\def \rank{\operatorname{rank}}


\theoremstyle{plain}
\newtheorem{theorem}{Theorem}[section]
\newtheorem{lemma}[theorem]{Lemma}
\newtheorem{proposition}[theorem]{Proposition}
\newtheorem{corollary}[theorem]{Corollary}

\newtheorem{THM}{Theorem}
\newtheorem{CORO}[THM]{Corollary}

\theoremstyle{definition}
\newtheorem{example}[theorem]{Example}
\newtheorem{definition}[theorem]{Definition}
\newtheorem{remark}[theorem]{Remark}

\newtheorem*{DEFIN}{Definition}
\newtheorem*{REM}{Remark}

\newtheorem*{QUESTION}{Question}

\pagestyle{plain}
\begin{document}
\title{Toric principal bundles, piecewise linear maps and Tits buildings}

\author{Kiumars Kaveh}
\address{Department of Mathematics, University of Pittsburgh,
Pittsburgh, PA, USA.}
\email{kaveh@pitt.edu}

\author{Christopher Manon}
\address{Department of Mathematics, University of Kentucky, Lexington, KY, USA}
\email{Christopher.Manon@uky.edu}

\date{\today}
\thanks{The first author is partially supported by National Science Foundation Grants (DMS-1601303 and DMS-2101843) and a Simons Collaboration Grant (award number 714052). Also the second author is partially supported by National Science Foundation Grants (DMS-1500966 ans DMS-2101911) and a Simons Collaboration Grant (award number 587209).}
\subjclass[2010]{}
\keywords{}
\begin{abstract}
We define the notion of a piecewise linear map from a fan $\Sigma$ to $\tilde{\B}(G)$, the cone over the Tits building of a linear algebraic group $G$. Let $X_\Sigma$ be a toric variety with fan $\Sigma$. We show that when $G$ is reductive the set of integral piecewise linear maps from $\Sigma$ to $\tilde{\B}(G)$ classifies the isomorphism classes of (framed) toric principal $G$-bundles on $X_\Sigma$. This in particular recovers Klyachko's classification of toric vector bundles, and gives new classification results for the orthogonal and symplectic toric principal bundles. 

\end{abstract}

\maketitle



\tableofcontents

\section*{Introduction}
In this paper we give a classification of torus equivariant principal bundles on toric varieties (or {\it toric} principal bundles for short). It extends Klyachko's well-known classification of torus equivariant vector bundles on toric varieties \cite{Klyachko}. Klyachko's classification itself is an extension of the classification of equivariant line bundles on toric varieties by integral piecewise linear functions. We show that, for a reductive algebraic group $G$, toric principal $G$-bundles are classified by the data of \emph{integral piecewise linear maps} to $\tilde{\B}(G)$, the \emph{cone} over (the underlying space of) the Tits building  of $G$. 

The first classification of toric vector bundles goes back to Kaneyama \cite{Kaneyama} and is in terms of certain cocycles. On the other hand, Klyachko's classification in \cite{Klyachko} is in terms of certain compatible filtrations on a finite dimensional  vector space. As for toric principal bundles, recently in the interesting papers \cite{Biswas, Biswas-Tannakian} the authors give a classification of toric principal bundles using certain data of homomorphisms and cocycles as well as certain filtered algebras. It is not immediately clear (at least to us) that their data defines a piecewise linear map to the cone over the building $\tilde{\B}(G)$ (in the sense of this paper). The classification in \cite{Biswas} seems to be in the spirit of Kaneyama. The classification in the present paper is in the spirit of Klyachko. We also mention \cite{Payne-cover} where the author considers a family of Klyachko filtrations parametrized by vectors in the fan. This is basically the same as our piecewise linear map in the case of toric vector bundles.



We start with a brief conceptual explanation for the appearance of buildings in the classification of toric principal bundles: 
as a corollary of the Luna slice theorem, one shows that torus equivariant principal $G$-bundles on a toric variety are trivial over toric affine charts and in such a chart the torus acts via a homomorphism from the torus to $G$ (see Definition \ref{def-local-equiv-trivial} and Theorem \ref{th-equiv-trivial-affine}). The image of a torus under a homomorphism necessarily lands in a maximal torus, and the arrangement of maximal tori in $G$ is encoded in its Tits building.
Our classification of toric principal bundles crucially relies on a realization of the Tits building of $G$ as the set of one-parameter subgroups of $G$ modulo certain equivalence relation (Section \ref{subsec-1-para-subgp}).



Throughout $\k$ denotes the ground field which we take to be algebraically closed. Let $\Sigma$ be a fan in $\R^n$ with $X_\Sigma$ its associated toric variety. Recall that a toric variety is a normal variety equipped with an action of algebraic torus $T$ such that $T$ has an open orbit isomorphic to $T$ itself.  A toric line bundle $\L$ on $X_\Sigma$ is a line bundle with a $T$-linearization. 
It is well-known that toric line bundles on $X_\Sigma$ are in one-to-one correspondence with functions $\phi: |\Sigma| \to \R$ that are piecewise linear with respect to $\Sigma$ and are integral, i.e. map $|\Sigma| \cap \Z^n$ to $\Z$ (here $|\Sigma|$ denotes the support of $\Sigma$, the union of all cones in $\Sigma$). A \emph{toric vector bundle} $\E$ on a toric variety $X_\Sigma$ is a vector bundle equipped with a $T$-linearization. A \emph{toric principal $G$-bundle} on $X_\Sigma$, where $G$ is a linear algebraic group, is a principal $G$-bundle together with a $T$-action that commutes with the $G$-action. We often write the $T$-action on the left and the $G$-action on the right. The isomorphism classes of rank $r$ toric vector bundles are in natural bijection with the isomorphism classes of toric principal $\GL(r)$-bundles.


Recall that a {\it building} is an abstract simplicial complex together with a collection of distinguished subcomplexes, called {\it apartments}, satisfying certain axioms (Definition \ref{def-building}). 
To a linear algebraic group $G$, there corresponds a building  $\Delta(G)$ whose simplices correspond to parabolic subgroups of $G$, its maximal simplices (called \emph{chambers}) correspond to Borel subgroups, and its apartments correspond to maximal tori in $G$. We remark that, since every parabolic subgroup contains the solvable radical of $G$, the building $\Delta(G)$ and that of its semisimple quotient, i.e. $\Delta(G_{\textup{ss}})$, can naturally be identified. So as, far as the simplicial complex structure of the building is concerned, we can always assume that $G$ is semisimple.

Let $G$ be semisimple. The abstract simplicial complex $\Delta(G)$ has a natural geometric realization. That is, there is a topological space $\B(G)$ such that each simplex in $\Delta(G)$ can be identified with a subset of $\B(G)$ homeomorphic to a standard simplex 
and these simplices are glued along their common subsimplices (see Section \ref{sec-buildings}). It is constructed as follows. For each maximal torus $H \subset G$ let $\Lambda^\vee(H)$ be its cocharacter lattice and let $\Lambda^\vee_\R(H)= \Lambda^\vee(H) \otimes_\Z \R$. The apartment corresponding to $H$ is the triangulation of the unit sphere in 
$\Lambda^\vee_\R(H)$ obtained by intersecting it with the Weyl chambers and their faces. Two simplices, in different apartments, are glued together if the corresponding faces represent the same parabolic subgroup in $G$. The topological space $\B(G)$ is obtained by gluing the unit spheres in the $\Lambda^\vee_\R(H)$, for all maximal tori $H$, along their common simplices. Similarly, we construct the topological space $\tilde{\B}(G)$ by gluing the vector spaces $\Lambda^\vee_\R(H)$, along their common faces of Weyl chambers. We think of $\tilde{\B}(G)$ as the cone over the topological space $\B(G)$.

While in our notation, we distinguish between the building as an abstract simplicial complex, i.e. $\Delta(G)$, and as a topological space, i.e. $\B(G)$, by abuse of terminology we refer to both $\Delta(G)$ and $\B(G)$ as the Tits building of $G$. 

Now let $G$ be a linear algebraic group and let $G_{\textup{ss}} = G / R(G)$ be the semisimple quotient of $G$. The previous construction in the semisimple case works in this case as well and we can define $\tilde{\B}(G)$ (respectively $\B(G)$) to be the topological space obtained by gluing the vector spaces $\Lambda^\vee_\R(H)$ (respectively unit spheres in the $\Lambda^\vee_\R(H)$), for all maximal tori $H \subset G$, along their common faces of Weyl chambers (respectively intersections of common faces with the unit spheres). 
When $G$ is reductive, the topological space $\tilde{\B}(G)$ is the Cartesian product of $\tilde{\B}(G_{\textup{ss}})$ with the real vector space $\Lambda^\vee(Z) \otimes \R$, where $Z=Z(G)^\circ$ is the connected component of the identity in the center of $G$ (see Definitions \ref{def-B} and \ref{def-tilde-B}). 

By slight abuse of terminology, for a linear algebraic group $G$, we refer to $\tilde{\B}(G)$ as the \emph{cone} over the Tits building of $G$ (strictly speaking, it is the \emph{cone} over the Tits building only in the semisimple case). Also, for a maximal torus $H$, we refer to $\Lambda^\vee_\R(H)$ as the \textit{cone} over the apartment of $H$ and denote it by $\tilde{A}_H$. 

We will see in Section \ref{subsec-1-para-subgp} that the set of lattice points in $\tilde{\B}(G)$ can be identified with the set of one-parameter subgroups of $G$ modulo certain equivalence relation (Definition \ref{def-1-para-equiv} and Corollary  \ref{cor-tilde-B-one-para}).

The following is the main definition in the paper (Definition  \ref{def-plm-to-B(G)}).
\begin{DEFIN}
We say that a function $\Phi: |\Sigma| \to \tilde{\B}(G)$ is a {\it piecewise linear map with respect to $\Sigma$} if the following hold:
\begin{itemize}
\item[(1)] For each cone $\sigma \in \Sigma$, the image $\Phi(\sigma)$ lies in a cone over an apartment $\tilde{A}_\sigma = \Lambda_\R^\vee(H_\sigma)$ (which of course is not necessarily unique). Here $H_\sigma \subset G$ is the corresponding maximal torus. 
\item[(2)] For each $\sigma \in \Sigma$, the restriction 
$\Phi_\sigma := \Phi_{|\sigma}: \sigma \to \tilde{A}_\sigma$ is an $\R$-linear map.
\end{itemize}
We call $\Phi$ an {\it integral} piecewise linear map if for each $\sigma \in \Sigma$, $\Phi_\sigma$ sends $\sigma \cap \Z^n$ to $\Lambda^\vee(H_\sigma)$.   

Let $G$, $G'$ be reductive groups. A homomorphism of algebraic groups $\alpha: G \to G'$ induces a natural map $\hat{\alpha}: \tilde{\B}(G) \to \tilde{\B}(G')$ (see Definition \ref{def-B(G)-B(G')}). If $\Phi: |\Sigma| \to \tilde{\B}(G)$ is a piecewise linear map then $\alpha_*(\Phi) := \hat{\alpha} \circ \Phi: |\Sigma| \to \tilde{\B}(G')$ is also a piecewise linear map. 
\end{DEFIN}

Throughout the paper we use the term ``piecewise linear function'' for a function with values in $\R$ and the term ``piecewise linear map'' for a function with values in $\tilde{\B}(G)$ (see Example \ref{ex-tlb} for how a piecewise linear function is a special case of a piecewise linear map).

Let $X_\Sigma$ be a toric variety. For the rest of the paper, we fix a point $x_0$ in the open torus orbit in $X_\Sigma$. By a {\it framed} toric principal $G$-bundle we mean a toric principal $G$-bundle $\P$ together with a choice of a point $p_0 \in \P_{x_0}$. The choice of $p_0 \in \P_{x_0}$ is equivalent to fixing a $G$-isomorphism between $\P_{x_0}$ and $G$ (regarded as a $G$-variety by multiplication from right). 
Let $\alpha: G \to G'$ be a homomorphism of algebraic groups. Let $\P$ (respectively $\P'$) be a framed toric principal $G$-bundle (respectively a framed toric principal $G'$-bundle) on $X_\Sigma$. A \emph{morphism} from $\P$ to $\P'$ is a morphism of bundles that is equivariant with respect to $\alpha$ and commutes with the $T$-action, and moreover, sends the frame $p_0 \in \P_{x_0}$ to the frame $p'_0 \in \P'_{x_0}$ (see the second paragraph after Definition \ref{def-plm-to-B(G)}).

The main result of the paper is the following (see Theorem \ref{th-principal-G-bundle-plm}). 

\begin{THM}[Main theorem]   \label{th-intro-principal-bundle}
Let $G$ be a reductive algebraic group over $\k$.
There is a one-to-one correspondence between the isomorphism classes of framed toric principal $G$-bundles $\P$ over $X_\Sigma$ and the integral piecewise linear maps $\Phi: |\Sigma| \to \tilde{\B}(G)$. 
{Moreover, let $\alpha:G \to G'$ be a homomorphism of reductive algebraic groups. Let $\mathcal{P}$ (respectively $\mathcal{P}'$) be a framed toric principal $G$-bundle (respectively $G'$-bundle) with corresponding piecewise linear map $\Phi: |\Sigma| \to \tilde{\B}(G)$ (respectively $\Phi': |\Sigma| \to \tilde{\B}(G')$). Then there is a morphism of framed toric principal bundles $F: \mathcal{P} \to \mathcal{P}'$, that is equivariant with respect to $\alpha$, if and only if $\Phi' = \alpha_*(\Phi)$.}
\end{THM}

In fact, we prove a more general version of Theorem \ref{th-intro-principal-bundle} that does not require $G$ to be reductive. Let us say that a toric principal $G$-bundle $\mathcal{P}$ over a toric variety $X_\Sigma$ is \textit{locally equivariantly trivial} if the following holds: for any toric affine chart $X_\sigma$, $\sigma \in \Sigma$, the restriction $\mathcal{P}_{|X_\sigma}$ is $T$-equivariantly isomorphic (as a principal $G$-bundle) to the trivial bundle $X_\sigma \times G$ where $T$ acts on $X_\sigma \times G$ diagonally, acting on $X_\sigma$ in the usual way and on $G$ via a homomorphism $\phi_\sigma: T \to G$ (see Definition \ref{def-local-equiv-trivial}).
 
\begin{THM}[Main theorem, second version] \label{th-intro-principal-bundle-v2}
Let $G$ be a linear algebraic group over $\k$. There is a one-to-one correspondence between the isomorphism classes of framed toric principal $G$-bundles $\P$ over $X_\Sigma$ that are locally equivariantly trivial and the integral piecewise linear maps $\Phi: |\Sigma| \to \tilde{\B}(G)$. 
{Moreover, let $\alpha:G \to G'$ be a homomorphism of linear algebraic groups. Let $\mathcal{P}$ (respectively $\mathcal{P}'$) be a locally equivariantly trivial framed toric principal $G$-bundle (respectively $G'$-bundle) with corresponding piecewise linear map $\Phi: |\Sigma| \to \tilde{\B}(G)$ (respectively $\Phi': |\Sigma| \to \tilde{\B}(G')$). Then there is a morphism of framed toric principal bundles $F: \mathcal{P} \to \mathcal{P}'$, that is equivariant with respect to $\alpha$, if and only if $\Phi' = \alpha_*(\Phi)$.
}
\end{THM}


When $G$ is a reductive algebraic group the local equivariant triviality for toric principal $G$-bundles is proved in \cite[Theorem 4.1]{Biswas-Tannakian} which in turn relies on \cite[Prop. 8.5]{BR} proved using a Luna slice argument. Hence Theorem \ref{th-intro-principal-bundle} follows from Theorem \ref{th-intro-principal-bundle-v2}.

We point out that the local equivariant triviality of a toric \textit{vector bundle} over any field is easy to prove (Proposition \ref{prop-toric-vb-over-affine-equiv-trivial} and \cite[Proposition 2.1.1]{Klyachko}). 
We also remark that the local triviality of toric principal $G$-bundles is not an immediate corollary of the local triviality of toric vector bundles. Let $G \subset \GL(E)$ be a closed reductive subgroup. Then a toric principal $G$-bundle $\P$ on an affine toric variety $X_\sigma$ has an associated toric vector bundle $\E = \P \times_G E$ and $\P$ is a subbundle of the frame bundle of $\E$. The equivariant triviality of $\E$ implies the equivariant triviality of its frame bundle but it does not immediately imply the equivariant triviality of the subbundle $\P$.

When the base field is $\C$, in \cite{Biswas-Serre} the local equivariant triviality is proved  for any linear algebraic group $G$. Hence when $\k = \C$, Theorem \ref{th-intro-principal-bundle} holds for any linear algebraic group $G$ over $\C$ as well. We ask the following question:

\begin{QUESTION}
For an algebraically closed field $\k$ of positive characteristic, can one give an example of a non-reductive linear algebraic group $G$ and a toric principal $G$-bundle over an affine toric variety that is not equivariantly trivial?
\end{QUESTION}


When $G$ is the general linear group, Theorem \ref{th-intro-principal-bundle} recovers Klyachko's classification of toric vector bundles (see Section \ref{sec-prelim-toric-vb} and Example  \ref{sec-toric-vb-PLM}). 




Theorem  \ref{th-intro-principal-bundle} readily gives Klyachko type classifications for toric principal bundles for other classical groups such as the orthogonal and symplectic groups (see Example \ref{ex-building-orth-symp} and Example \ref{ex-orth-symp-bundle}). We only discuss the cases of symplectic and even orthogonal groups. The odd orthogonal group case can be dealt with in a similar fashion.  Let $\langle \cdot, \cdot \rangle$ be a symmetric or skew-symmetric bilinear form on a vector space $E \cong \k^{2r}$. Let $G$ be the group of linear isomorphisms of $E$ preserving $\langle \cdot, \cdot \rangle$. Thus $G \cong \operatorname{O}(2r)$ or $\operatorname{Sp}(2r)$. Consider a flag of subspaces:
$$F_\bullet = (\{0\} \subsetneqq F_1 \subsetneqq \cdots \subsetneqq F_{k} = E).$$ We say $F_\bullet$ is an {\it isotropic flag} if  
for each $j=1, \ldots, k$ we have $F_j^{\perp} = F_{k-j}$. An {\it integral labeled isotropic flag} $(F_\bullet, c_\bullet)$ is an isotropic flag $F_\bullet$ whose subspaces are labeled by a decreasing sequence of integers $c_\bullet = (c_1 > \cdots > c_{k})$ such that $c_j = -c_{k+1-j}$, for $j=1, \ldots, k$. Alternatively, we can think of an integral labeled isotropic flag as a $\Z$-filtration of isotropic subspaces in $E$ (see Example \ref{ex-building-orth-symp}).

\begin{CORO}[Toric principal bundles for symplectic and orthogonal groups]     \label{cor-intro-orth-symp-bundle}
Let $G = \operatorname{O}(2r)$ or $\operatorname{Sp}(2r)$.	
With notation as above, the isomorphism classes of toric principal $G$-bundles on $X_\Sigma$ are in one-to-one correspondence with the collections $\{ (F_{\rho, \bullet}, c_{\rho, \bullet}) \mid \rho \in \Sigma(1) \}$ of integral labeled isotropic flags that satisfy the following compatibility condition:  for each cone $\sigma \in \Sigma$, there exists a normal frame 
$L_\sigma = \{L_{\sigma, 1}, \ldots, L_{\sigma, 2r} \}$ and a linear map $\Phi_\sigma: (\sigma \cap N) \to \Z^r$ such that for each ray $\rho \in \sigma(1)$ the labeled isotropic flag associated to $(L_\sigma, \Phi_\sigma(\v_\rho))$ coincides with $(F_{\rho, \bullet}, c_{\rho, \bullet})$ (see Example \ref{ex-building-orth-symp} for the notion of normal frame).  

\end{CORO}

\textbf{A little application.}
The following illustrates an application of our building approach. Let $G$ be a reductive algebraic subgroup with $K \subset G$ a closed subgroup. We say that a toric principal $G$-bundle has an \emph{equivariant reduction of structure group to $K$} if there is a toric principal $K$-bundle $\P'$ on $X_\Sigma$ such that $\P$ is ($T$-equivariantly) isomorphic to $\P' \times_K G$. This means that $\P$ has a $T$-equivariant trivializing open cover such that the corresponding transition functions all lie in $K$. Let $\mathcal{P}$ be a toric principal $G$-bundle over $X_\Sigma$. We say that $\P$ \emph{splits equivariantly} if the structure group of $\P$ can be reduced equivariantly to a maximal torus $H \subset G$.  
By Theorem \ref{th-intro-principal-bundle}, after choosing a frame, $\P$ corresponds to a piecewise linear map $\Phi$ from $|\Sigma|$ to $\tilde{\B}(G)$. Suppose the image of $\Phi$ lands in a single apartment (corresponding to a maximal torus $H \subset G$). It follows from the proof of Theorem \ref{th-principal-G-bundle-plm-v2} that all the transition functions of $\P$ (with respect to the open cover by toric affine charts) can be taken to lie in $H$ and hence $\P$ splits.  
Now, consider the case $X_\Sigma = \mathbb{P}^1$.
By the building axioms (Definition \ref{def-building}) we know that any two simplices lie in a common apartment. Since the fan of $\mathbb{P}^1$ has only two rays, we conclude that the image of $\Phi$ lies in a single apartment (corresponding to some maximal torus $H \subset G$) and hence $\P$ splits.
This is the toric principal bundle version of Grothendieck's theorem on splitting of vector bundles over $\mathbb{P}^1$. 


It is well-known that  the equivariant Chow cohomology ring $A_T^*(X_\Sigma)$ can be identified with the algebra of piecewise \emph{polynomial} functions on $\Sigma$ (see \cite{Payne-Chow-coh-toric}). We can immediately recover piecewise polynomial functions representing the equivariant characteristic classes of a toric principal $G$-bundle $\P$ on $X_\Sigma$ from its piecewise linear map $\Phi$. Fix a maximal torus $H \subset G$ and let $p$ be a Weyl group invariant polynomial on the cocharacter lattice of $H$. The polynomial $p$ defines a well-defined function on the cone over the building $\tilde{\B}(G)$ which we also denote by $p$. We have the following (see Section \ref{sec-char-class} and Theorem \ref{th-Chern-Weil-toric-principal-bundle}). 

\begin{THM}   \label{th-intro-char-classes}
The image of a Weyl group invariant polynomial $p$ under the equivariant Chern-Weil homomorphism is given by the piecewise polynomial function $p \circ \Phi$.
\end{THM}


As a special case of Theorem \ref{th-intro-char-classes}, one recovers the equivariant Chern classes of a toric vector bundle. {Each elementary symmetric function $\epsilon_i: \R^r \to \R$} naturally gives an elementary symmetric function $\epsilon_i: \tilde{\B}(\GL(r)) \to \R$, $i=1, \ldots, r$ (see the end of Section \ref{sec-char-class}). The $T$-equivariant Chern classes of a rank $r$ toric vector bundle $\E$ can readily be obtained from its piecewise linear map $\Phi$, that is, the composition $\epsilon_i \circ \Phi: |\Sigma| \to \R$ is exactly the piecewise linear map representing the $i$-th equivariant Chern class of $\E$. This description of equivariant Chern classes in not new and can be found in \cite[Proposition 3.1]{Payne-moduli}). 

\begin{REM}
The present paper is a companion paper to \cite{KM-PL} where the classification of toric principal bundles is far extended to \emph{toric flat families}. One of the main results there states that torus equivariant flat families $\pi: \mathcal{X} \to X_\Sigma$ with reduced and irreducible affine fibers and generic fiber $Y=\Spec(R)$ are classified by \emph{piecewise linear valuations} on $R$. This valuation perspective is then used to obtain results on finite generation of Cox rings of projectivized toric vector bundles. This point of view also opens doors to the study of tropical geometry over the semifield of piecewise linear functions.    
\end{REM}

\begin{REM}
It was suggested to us by Bogdan Ion, that our Theorem \ref{th-intro-principal-bundle} can be interpreted as saying $\tilde{\B}(G)$ is a tropical or piecewise linear analogue of the classifying space of the group $G$. 
\end{REM}

\noindent{\bf Acknowledgment.} We would like to thank Sam Payne, Greg Smith, Indranil Biswas, Arijit Dey, Mainak Poddar, Kelly Jabbusch, Sandra Di Rocco, Roman Fedorov, Bogdan Ion and Dave Anderson for useful conversations and email correspondence. In particular, the authors are grateful to Greg Smith who brought their attention to the interesting subject of toric vector bundles. We also thank Mainak Poddar for suggesting the term {\it framed} toric principal bundle. We also thank Boris Tsvelikhovsky for reading of some parts of the paper and giving useful comments. Finally we are in debt to anonymous referee for numerous valuable suggestions that greatly improved the content and presentation of the paper.
\\

\noindent{\bf Notation.} Throughout the paper we will use the
following notation: 
\begin{itemize}
\item $\k$ is the base field which we take to be algebraically closed.
\item $G$ is a reductive algebraic group over $\k$. 
\item {For a maximal torus $H \subset G$, $\Lambda^\vee(H)$ denotes the cocharacter lattice of $H$ and $\Lambda^\vee_\R(H) = \Lambda^\vee(H) \otimes \R$.}
\item $\Delta(G)$ is the Tits building of $G$ as an abstract simplicial complex. The simplices in $\Delta(G)$ correspond to parabolic subgroups of $G$ and the apartments correspond to the maximal tori in $G$ (see Section \ref{sec-buildings}).
\item {$\B(G)$ is the topological space that is the union of unit spheres in $\Lambda^\vee_\R(H)$, for all maximal tori $H \subset G$ and glued together along simplices corresponding to the same parabolic subgroups (see Definition \ref{def-B}). When $G$ is semisimple, $\B(G)$ is the underlying space of the simplicial complex $\Delta(G)$. By slight abuse of terminology we refer to $\B(G)$ as \emph{(the underlying space of) the Tits building of $G$}.} 
\item {$\tilde{\B}(G)$ is the topological space that is the union of all the vector spaces $\Lambda^\vee_\R(H)$, for all maximal tori $H \subset G$ and glued together along faces of Weyl chambers corresponding to the same parabolic subgroups (see Definition \ref{def-tilde-B}). When $G$ is semisimple, $\tilde{\B}(G)$ is the cone over $\B(G)$. For a reductive group $G$ with semisimple quotient $G_{\textup{ss}}$, $\tilde{\B}(G)$ is the Cartesian product of $\tilde{\B}(G_{\textup{ss}})$ with the real vector space $\Lambda^\vee(Z) \otimes \R$ corresponding to the identity component $Z$ of the center of $G$. By slight abuse of terminology, we refer to $\tilde{\B}(G)$ as the \emph{cone over the Tits building of $G$}.}
\item $E \cong \k^r$ is a finite dimensional $\k$-vector space. It is usually taken to be the fiber over the distinguished point $x_0$ of a toric vector bundle. 
\item $\Delta(\GL(E))$ is the Tits building of $\GL(E)$. Its vertices correspond to flags of subspaces in $E$. Apartments correspond to choices of frames in $E$, that is, direct sum decompositions of $E$ into $1$-dimensional subspaces (see Example \ref{ex-building-GL_r}).  
\item $\tilde{\B}(\GL(E))$ denotes the cone over $\B(\GL(E))$. We can identify $\tilde{\B}(\GL(E))$ with the set of all prevaluations $v: E \setminus \{0\} \to \R$ (see Example \ref{ex-building-GL_r} and Section \ref{subsec-buildings-preval}). 
\item $T \cong \mathbb{G}_m^n$ denotes an algebraic torus with $M$ and $N$ its character and cocharacter lattices respectively. In general, $M$ and $N$ denote rank $n$ free abelian groups dual to each other. We denote the pairing between them by $\langle \cdot, \cdot \rangle: M \times N \to \Z$. We let $M_\R = M \otimes \R$ and $N_\R = N \otimes \R$ to be the corresponding $\R$-vector spaces. 
\item $X_\sigma$ is the affine toric variety corresponding to a (strictly convex rational polyhedral) cone $\sigma \subset N_\R$.
\item $\Sigma$ is a fan in $N_\R$ with corresponding toric variety $X_\Sigma$. We denote the support of $\Sigma$, i.e. the union of cones in it, by $|\Sigma|$. 
\item $\rho$ denotes a ray in $\Sigma$ with ${\bf v}_\rho$ its primitive vector, i.e. the smallest nonzero integral vector along $\rho$.
\item $\Phi: |\Sigma| \to \tilde{\B}(G)$ is a piecewise linear map to the cone over the Tits building of $G$ (see Definition  \ref{def-plm-to-B(G)}).
\end{itemize}

\section{Preliminaries}
\subsection{Preliminaries on toric vector bundles}   \label{sec-prelim-toric-vb}
In this section we review Klyachko's classification of toric vector bundles \cite{Klyachko}.
We mainly follow the exposition in \cite[Section 2]{Payne-moduli}. The first classification of toric vector bundles goes back to \cite{Kaneyama}. We refer the reader to \cite[Section 2.4]{Payne-moduli} for a nice brief history of the subject. 

Let $T \cong \mathbb{G}_m^n$ denote an $n$-dimensional algebraic torus over an algebraically closed field $\k$. We let $M$ and $N$ denote its character and cocharacter lattices respectively. We also denote by $M_\R$ and $N_\R$ the $\R$-vector spaces spanned by $M$ and $N$. For cone $\sigma \in N_\R$ let $M_\sigma$ be the quotient lattice:
$$M_\sigma = M / (\sigma^\perp \cap M).$$
Let $\Sigma$ be a (finite rational polyhedral) fan in $N_\R$ and let $X_\Sigma$ be the corresponding toric variety. Also $X_\sigma$ denotes the invariant affine open subset in $X_\Sigma$ corresponding to a cone $\sigma \in \Sigma$. We denote the support of $\Sigma$, that is the union of all the cones in $\Sigma$, by $|\Sigma|$. For each $i$, $\Sigma(i)$ denotes the subset of $i$-dimensional cones in $\Sigma$. In particular, $\Sigma(1)$ is the set of rays in $\Sigma$. For each ray $\rho \in \Sigma(1)$ we let $\v_\rho$ be the primitive vector along $\rho$, i.e. $\v_\rho$ is the unique vector on $\rho$ whose integral length is equal to $1$.

We say that $\E$ is a {\it toric vector bundle} on $X_\Sigma$ if $\E$ is a vector bundle on $X_\Sigma$ equipped with a $T$-linearization. This means that there is an action of $T$ on $\E$ that lifts the $T$-action on $X_\Sigma$ such that the action map $\E_x \to \E_{t \cdot x}$ for any $t \in T$, $x \in X_\Sigma$ is linear. {By a morphism of toric vector bundles on $X_\Sigma$ we mean a $T$-equivariant morphism of vector bundles.}

We fix a point $x_0 \in X_0 \subset X_\Sigma$ in the dense orbit $X_0$. We often identify $X_0$ with $T$ and think of $x_0$ as the identity element in $T$. We let $E = \E_{x_0}$ denote the fiber of $\E$ over $x_0$. It is an $r$-dimensional vector space where $r = \rank(\E)$. 

For each cone $\sigma \in \Sigma$ we have an invariant open subset $X_\sigma \subset X_\Sigma$. The space of sections $\Gamma(X_\sigma, \E)$ is a $T$-module. We let  $\Gamma(X_\sigma, \E)_u \subseteq\Gamma(X_\sigma, \E)$ be the weight space corresponding to a weight $u \in M$; these spaces define the weight decomposition: 

$$\Gamma(X_\sigma, \E) = \bigoplus_{u \in M} \Gamma(X_\sigma, \E)_u.$$ Every section in $\Gamma(X_\sigma, \E)_u$ is determined by its value at $x_0$.  Thus, by restricting sections to $E = \E_{x_0}$, we get an embedding $\Gamma(X_\sigma, \E)_u \hookrightarrow E$. Let us denote the image of $\Gamma(X_\sigma, \E)_u$ in $E$ by $E_u^\sigma$. Note that if $u' \in \sigma^\vee \cap M$ then multiplication by the character $\chi^{u'}$ gives an injection $\Gamma(X_\sigma, \E)_u \hookrightarrow \Gamma(X_\sigma, \E)_{u-u'}$. Moreover, the multiplication map by $\chi^{u'}$ commutes with the evaluation at $x_0$ and hence induces an inclusion $E_u^\sigma \subset E_{u-u'}^\sigma$. If $u' \in \sigma^\perp$ then these maps are isomorphisms and thus $E_u^\sigma$ depends only on the class $[u] \in M_\sigma = M/(\sigma^\perp \cap M)$. For a ray $\rho \in \Sigma(1)$ we write $$E^\rho_i = E_u^\rho,$$ for any $u \in M$ with $\langle u, \v_\rho \rangle = i$ (all such $u$ define the same class in $M_\rho$). Equivalently, one can define $E^\rho_u$ as follows (see \cite[\S 0.1]{Klyachko}). Pick a point $x_\rho$ in the orbit $O_\rho$ and let:
$$E^\rho_u = \{ e \in E \mid \lim_{t \cdot x_0 \to x_\rho} \chi^u(t)^{-1}(t \cdot e) \textup{ exists in } \E \},$$
where $t$ varies in $T$ in such a way that $t \cdot x_0$ approaches $x_\rho$. One checks that $E^\rho_u$ does not depend on the choice of $x_\rho$ and only depends on $i = \langle u, \v_\rho \rangle$. 

We thus have a decreasing filtration of $E$:
\begin{equation}  \label{equ-filt-E-rho}
\cdots \supset E^\rho_{i-1} \supset E^\rho_i \supset E^\rho_{i+1} \supset \cdots
\end{equation}

An important step in the classification of toric vector bundles is that a toric vector bundle over an affine toric variety is {\it equivariantly trivial}. That is, it decomposes $T$-equivariantly as a sum of trivial line bundles. Let $\sigma$ be a strictly convex rational polyhedral cone with corresponding affine toric variety $X_\sigma$. Given $u \in M$, let $\L_u$ be the trivial line bundle $X_\sigma \times \mathbb{A}^1$ on $X_\sigma$ where $T$ acts on $\mathbb{A}^1$ via the character $u$. One observes that the toric line bundle $\L_u$ in fact only depends on the class $[u] \in M_\sigma$. Hence we also denote this line bundle by $\L_{[u]}$. One has the following:
\begin{proposition}   \label{prop-toric-vb-over-affine-equiv-trivial}
{Let $\E$ be a toric vector bundle of rank $r$ on an affine toric variety $X_\sigma$. Then $\E$ splits equivariantly into a sum of line bundles $\L_{u_j}$: 
$$\E = \bigoplus_{j=1}^r \L_{[u_j]}$$
where $[u_j] \in M_\sigma$.}
\end{proposition}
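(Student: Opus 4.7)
The plan is to translate the statement into commutative algebra and apply a graded version of Nakayama's lemma. Since $X_\sigma = \Spec R$ with $R = \C[\sigma^\vee \cap M]$ is affine, the vector bundle $\E$ corresponds to the finitely generated projective $R$-module $\mathcal{M} = \Gamma(X_\sigma, \E)$ of rank $r$. The $T$-linearization equips $\mathcal{M}$ with an $M$-grading $\mathcal{M} = \bigoplus_{u \in M} \mathcal{M}_u$, where $\mathcal{M}_u = \Gamma(X_\sigma, \E)_u$ in the notation of the preliminaries. The goal becomes exhibiting a homogeneous free $R$-basis $s_1, \ldots, s_r$ of $\mathcal{M}$ with weights $u_1, \ldots, u_r \in M$.

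First I would reduce to the case where $\sigma$ is full-dimensional. In general, writing $N' = \text{span}(\sigma) \cap N$ and $N'' = N/N'$, one gets a splitting $X_\sigma \cong X_{\bar\sigma} \times T''$ where $\bar\sigma$ is a full-dimensional cone in $N'_\Q$ and $T''$ is the torus with cocharacter lattice $N''$. A standard check shows that a $T$-equivariant vector bundle on a torus $T''$ (acting by translation) is equivariantly trivial, which reduces us to the full-dimensional case. Now $R$ has a unique graded maximal ideal $\mathfrak{m}_\sigma = \bigoplus_{u \in (\sigma^\vee \setminus \sigma^\perp) \cap M} \C \chi^u$, the ideal of the unique $T$-fixed point $x_\sigma \in X_\sigma$.

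Next, the fiber $\mathcal{M}/\mathfrak{m}_\sigma \mathcal{M} = \E_{x_\sigma}$ is an $r$-dimensional $T$-representation and hence decomposes as a graded $\C$-vector space $\bigoplus_u V_u$. I would pick a homogeneous $\C$-basis $\bar{s}_1, \ldots, \bar{s}_r$ of this fiber with $\bar{s}_i$ of weight $u_i$, and lift each to an equivariant section $s_i \in \mathcal{M}_{u_i}$; this lift exists because global sections on an affine scheme surject onto any fiber, and the surjection can be chosen $T$-equivariantly via averaging over the torus action on the (finite-dimensional) weight space. The sections $s_i$ then induce a $T$-equivariant morphism $\phi: \bigoplus_{i=1}^r \L_{[u_i]} \to \E$ sending the canonical weight-$u_i$ section of $\L_{[u_i]}$ to $s_i$. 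The graded Nakayama lemma (applicable since $R$ is graded-local at $\mathfrak{m}_\sigma$) guarantees that the $s_i$ generate $\mathcal{M}$, so $\phi$ is surjective; and any surjection between locally free sheaves of the same rank is an isomorphism.

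The main obstacle I expect is the reduction when $\sigma$ is not full-dimensional, where there is no $T$-fixed point and graded Nakayama does not apply directly. The cleanest fix is the product decomposition $X_\sigma \cong X_{\bar\sigma} \times T''$ combined with the equivariant triviality of vector bundles on a torus under translation action, which one can verify by evaluating at the identity and using that the resulting map of $T''$-equivariant sheaves is an iso fiberwise. Once this reduction is in place, the graded Nakayama argument handles the rest uniformly, and the weights $[u_i] \in M_\sigma$ rather than $M$ appear precisely because tensoring a line bundle $\L_u$ by $\chi^{u'}$ for $u' \in \sigma^\perp \cap M$ yields an isomorphic equivariant line bundle.
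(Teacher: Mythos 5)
Your proof is correct, and it follows the paper's overall skeleton—reduce to a full-dimensional cone, then lift a weight basis of the fiber at the distinguished fixed point $x_\sigma$—but the mechanism you use to conclude is genuinely different. The paper invokes the fact that every vector bundle on an affine variety is globally generated to get a $T$-equivariant surjection $H^0(X_\sigma,\E)\to \E_{x_\sigma}$, chooses weight sections $s_1,\ldots,s_r$ restricting to a basis of $\E_{x_\sigma}$, and then argues geometrically: the locus where the $s_i$ are linearly dependent is closed and $T$-invariant and avoids $x_\sigma$, hence is empty, since for full-dimensional $\sigma$ the point $x_\sigma$ lies in the closure of every $T$-orbit of $X_\sigma$. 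You instead stay in graded commutative algebra: identify $\E_{x_\sigma}$ with $\mathcal{M}/\mathfrak{m}_\sigma\mathcal{M}$, lift homogeneous generators (these exist simply because the quotient map is a surjection of $M$-graded modules, so your ``averaging'' step is unnecessary), apply graded Nakayama—which is legitimate here because pairing with an interior lattice point of $\sigma$ coarsens the $M$-grading to an $\mathbb{N}$-grading in which $\mathfrak{m}_\sigma$ has strictly positive degrees—and finish with the standard fact that a surjection of locally free sheaves of equal rank is an isomorphism. The trade-off: the paper's degeneracy-locus argument is shorter and exploits the toric orbit structure directly, while yours avoids both the global-generation input and the orbit-closure fact, replacing them with routine graded algebra; you also supply a justification for the reduction to full-dimensional $\sigma$ (via $X_\sigma\cong X_{\bar\sigma}\times T''$ and equivariant triviality over the torus factor), a step the paper dispatches with an unproved ``without loss of generality.''
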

\begin{proof}
Without loss of generality we can assume that $\sigma$ is a full dimensional cone. Let $x_\sigma$ denote the unique $T$-fixed point in $X_\sigma$. One knows that every vector bundle over an affine variety is globally generated \cite[Example II 5.16.2]{Hartshorne}. Thus the restriction map $H^0(X_\sigma, \E) \to \E_{x_\sigma}$ is a surjective $T$-equivariant map. Hence we can find weight sections $s_1, \ldots, s_r \in H^0(X_\sigma, \E)$ such that their restrictions form a basis for $\E_{x_\sigma}$. Now the set of $x \in X_\sigma$ where the set $\{s_1(x), \ldots, s_r(x)\}$ is linearly dependent is a closed $T$-invariant subset which does not contain $x_\sigma$, hence it must be the empty set. The weight sections $s_i$ then provide an equivariant trivialization of $\E$. 
\end{proof}
We usually denote the multiset $\{ [u_1], \ldots, [u_r]\} \subset M_\sigma$ by $u(\sigma)$. The above shows that the filtrations $(E^\rho_i)_{i \in \Z}$, $\rho \in \Sigma(1)$, satisfy the following compatibility condition: 
{There is a decomposition $E = \bigoplus_{j=1}^r L_j$ of $E$ into a direct sum of $1$-dimensional subspaces $L_j$ and a multiset $u(\sigma) = \{ [u_1], \ldots, [u_r]\} \subset M_\sigma$ such that for any ray $\rho \in \sigma(1)$ we have:
\begin{equation}  \label{equ-Klyachko-comp-condition}
E^\rho_i = \sum_{\langle u_j, \v_\rho \rangle \geq i}  L_j.
\end{equation}
}
{We call a collection of decreasing $\Z$-filtrations $\{(E_i^\rho) \mid \rho \in \Sigma(1)\}$ a \emph{compatible collection of filtrations} if for any $\sigma \in \Sigma$ there is a direct sum decomposition $E=\bigoplus_{j=1}^r L_j$ of $E$ into $1$-dimensional subspaces and a multiset $\{ [u_1], \ldots, [u_r]\} \subset M_\sigma$ such that \eqref{equ-Klyachko-comp-condition} holds. We also need the notion of a morphism between compatible filtrations. Let $E$, $E'$ be finite dimensional vector spaces with compatible collections of filtrations $\{(E^\rho_i) \mid \rho \in \Sigma(1)\}$ and 
$\{({E'}^\rho_i) \mid \rho \in \Sigma(1)\}$ respectively. A \emph{morphism} between these compatible collections is a linear map $F: E \to E'$ such that $F(E_i^\rho) \subset {E'}_i^\rho$, for all $i \in \Z$ and $\rho \in \Sigma(1)$.} 



{The following is the main result in classification of toric vector bundles (see \cite[Theorem 2.2.1]{Klyachko}).
\begin{theorem}[Klyachko]   \label{th-Klyacjko}
The category of toric vector bundles $\E$ on $X_\Sigma$ is naturally equivalent to the category of compatible collections of filtrations on finite dimensional vector spaces $E$. 
\end{theorem}
}

\subsection{Tits Buildings}  \label{sec-buildings}
A \emph{building} is an (abstract) simplicial complex together with a collection of distinguished subcomplexes called \emph{apartments} that satisfy certain axioms. One can think of the notion of building as a ``discretization'' of the notion of symmetric space from Lie theory and differential geometry. While symmetric spaces exist for real and complex groups, buildings can be defined for any linear algebraic group over a field. 

There are two important kinds of buildings: \textit{spherical} buildings and \textit{affine} buildings. Each apartment in a spherical building is a triangulation of a sphere whereas each apartment in an affine building is a triangulation of an affine space. To a linear algebraic group $G$ one associates its \textit{Tits building} which is an example of a spherical building. Similarly, to a linear algebraic group $G$ over a discretely valued field one associates its \textit{Bruhat-Tits building} which is an example of an affine building. In this paper we will only be concerned with Tits buildings.


We denote the Tits building, as a simplicial complex, of a linear algebraic group $G$ by $\Delta(G)$. We recall that the simplices in $\Delta(G)$ correspond to parabolic subgroups of $G$. For parabolic subgroups $P_1$, $P_2$, the simplex corresponding to $P_1$ lies in the boundary of that of $P_2$ if $P_2 \subset P_1$. The apartments in $\Delta(G)$ correspond to maximal tori in $G$. 
{The apartment corresponding to a maximal torus $H$ is the union of all simplices corresponding to parabolic subgroups $P$ that contain $H$.
Let $\Lambda^\vee(H)$ denote the cocharacter lattice of $H$ and put $\Lambda^\vee_\R(H) = \Lambda^\vee(H) \otimes_\Z \R$. The apartment corresponding to $H$ is the \emph{Coxeter complex} of $(G, H)$ which lives in $\Lambda^\vee_\R(H)$ (see Figure \ref{fig-Coxeter-complex-SL3}).} 

\begin{remark}[Reducing to the semisimple case]   \label{rem-semisimple-quotient}
Every parabolic subgroup contains the solvable radical $R(G)$ of $G$. This implies that the building $\Delta(G)$, as a simplicial complex, can be identified with that of its semisimple quotient $G_{\textup{ss}} = G/R(G)$. 
\end{remark}




\begin{figure}  
    \centering
    \scalebox{.5}{
    \begin{tikzpicture}
    \node[circle, draw, minimum size = 5cm] (c) at (0,0){}; 
    \node[circle, draw=black, fill=black] (c) at (2.5, 0){};
    \node[circle, draw=black, fill=black] (c) at (1.25, 2.165){};
    \node[circle, draw=black, fill=black] (c) at (-1.25, 2.165){};
    \node[circle, draw=black, fill=black] (c) at (-2.5, 0){};
    \node[circle, draw=black, fill=black] (c) at (-1.25, -2.165){};
    \node[circle, draw=black, fill=black] (c) at (1.25, -2.165){};
    
    \draw [-stealth](0,0) -- ($(0,0)!4cm!(2.5,0)$);
    \draw [-stealth](0,0) -- ($(0,0)!4cm!(1.25, 2.165)$);
    \draw [-stealth](0,0) -- ($(0,0)!4cm!(-1.25, 2.165)$);
    \draw [-stealth](0,0) -- ($(0,0)!4cm!(-2.5, 0)$);
    \draw [-stealth](0,0) -- ($(0,0)!4cm!(-1.25, -2.165)$);
    \draw [-stealth](0,0) -- ($(0,0)!4cm!(1.25, -2.165)$);
    
    \end{tikzpicture}}
\caption{An apartment in the Tits building of $\SL(3)$. It is a triangulation of the circle.}  
\label{fig-Coxeter-complex-SL3}
\end{figure}

By Remark \ref{rem-semisimple-quotient}, {without changing the building (as an abstract simplicial complex)}, we can always reduce to the semisimple case. Let $G$ be a semisimple linear algebraic group. The abstract simplicial complex $\Delta(G)$ has a natural geometric realization, that is, a topological space $\B(G)$ such that each simplex in $\Delta(G)$ can be identified with a subset of $\B(G)$ homeomorphic to a standard simplex and these simplices intersect, as subsets of $\B(G)$, along their common subsimplices. Below we explain the construction of $\B(G)$.

Let $H$ be a maximal torus and let $A_H$ denote the sphere obtained by taking the quotient of $\Lambda_\R^\vee(H) \setminus \{0\}$ by the positive scalars $\R_{>0}$. If we fix an inner product on $\Lambda_\R^\vee(H)$, this quotient can be identified with the unit sphere in $\Lambda_\R^\vee(H)$. {When $G$ is semisimple, the Weyl chambers are simplicial cones and the Coxeter complex of $(G, H)$ can be realized as the triangulation of $A_H$ obtained by the images of Weyl chambers (see Figure \ref{fig-Coxeter-complex-SL3}).}

There is a one-to-one correspondence between the faces of Weyl chambers in $\Lambda_\R^\vee(H)$ and the parabolic subgroups containing $H$. Suppose a parabolic subgroup $P$ has two maximal tori $H$ and $H'$ respectively. 
In Section \ref{subsec-1-para-subgp} (Proposition  \ref{prop-P-lambda}) we will see that 
there is a natural linear isomorphism between the face in $\Lambda_\R^\vee(H)$ corresponding to $P$ and the one in $\Lambda_\R^\vee(H')$ corresponding to $P$. Moreover, this linear isomorphism sends lattice points to lattice points (i.e. sends the points in $\Lambda^\vee(H)$ to the points in $\Lambda^\vee(H')$).

\begin{definition}[Underlying space of the Tits building]
\label{def-B}
{Let $G$ be a linear algebraic group. Consider the topological space $\B(G)$ obtained by gluing the spheres $A_H$, for all maximal tori $H \subset G$, along the simplices corresponding to the same parabolic subgroups. We remark that when $G$ is semisimple, the topological space $\B(G)$ is the underlying space of the simplicial complex $\Delta(G)$, that is, every simplex in $\Delta(G)$ corresponds to a subset of $\B(G)$ homeomorphic to a standard simplex. 

By slight abuse of terminology, for a linear algebraic group $G$, we refer to $\B(G)$ as the \emph{underlying space of the Tits building of $G$} (or just the \emph{Tits building of $G$} for short). We also refer to $A_H $ as the \emph{underlying space of the apartment associated to $H$}.}
\end{definition}

\begin{definition}[Cone over the Tits building]
\label{def-tilde-B}
{
Let $G$ be a linear algebraic group. We let $\tilde{\B}(G)$ to be the topological space obtained by gluing the vector spaces $\Lambda_\R^\vee(H)$, for all maximal tori $H \subset G$, along faces of Weyl chambers corresponding to the same parabolic subgroups. When $G$ is semisimple, $\tilde{\B}(G)$ is the cone over $\B(G)$. 
If $G$ is a reductive group with semisimple quotient $G_{\textup{ss}}$, $\tilde{\B}(G)$ is the Cartesian product of $\tilde{\B}(G_{\textup{ss}})$ with the real vector space $\Lambda^\vee(Z) \otimes \R$, where $Z$ denotes the identity component in the center of $G$.

By slight abuse of terminology, for a linear algebraic group $G$, we refer to $\tilde{\B}(G)$ as the \emph{cone over the Tits building of $G$}.
For a maximal torus $H$, we denote the vector space  $\Lambda_\R^\vee(H)$ by $\tilde{A}_H$ and refer to it as the \emph{cone over the apartment $A_H$}.
We denote the subset of $\tilde{\B}(G)$ obtained by taking the union of lattices $\Lambda^\vee(H)$, for all maximal tori $H$, by $\tilde{\B}_\Z(G)$ and refer to it as the \emph{set of lattice points} in $\tilde{\B}(G)$.} 
\end{definition}

In this paper we only deal with the Tits building associated to a linear algebraic group and we are not concerned with the general theory of buildings. Nevertheless we recall the definition of a building, as an abstract simplicial complex, here. 
\begin{definition}[Building as an abstract simplicial complex]  \label{def-building}
A $d$-dimensional {\it building} $\Delta$ is an abstract simplicial complex that is a union of subcomplexes $A$ called {\it apartments} satisfying the following axioms:
\begin{itemize}
\item[(a)] Every $k$-simplex of $\Delta$ is within at least three $d$-simplices if $k < d$.
\item[(b)] Any $(d-1)$-simplex in an apartment $A$ lies in exactly two adjacent $d$-simplices of $A$ and the graph of adjacent $d$-simplices is connected.
\item[(c)] Any two simplices in $\Delta$ lie in some common apartment $A$. If two simplices both lie in apartments $A$ and $A'$, then there is a simplicial isomorphism of $A$ onto $A'$ fixing the vertices of the two simplices.
\end{itemize} 
A $d$-simplex in $A$ is called a {\it chamber}. The rank of the building is defined to be $d + 1$.
\end{definition}

\subsection{Tits buildings and one-parameter subgroups}   \label{subsec-1-para-subgp}
We present a natural way to realize (the cone over) the Tits building of $G$, namely, as the space of all algebraic one-parameter subgroups of $G$ modulo a certain equivalence relation. This construction of the Tits building of a linear algebraic group from one-parameter subgroups also appears in \cite[Section 2.2]{Mumford}.
The difference between our approach in this section and that of \cite[Section 2.2]{Mumford} is that the latter is interested in describing the Tits building whereas we are interested in describing the \emph{cone} over the Tits building. 

Let $G$ be a linear algebraic group. By an \emph{algebraic one-parameter subgroup} of $G$ (or a \emph{one-parameter subgroup} for short) we mean a homomorphism of algebraic groups $\lambda: \mathbb{G}_m \to G$. 


\begin{definition}[Equivalence of one-parameter subgroups]   \label{def-1-para-equiv}
Let $\lambda_1$, $\lambda_2$ be one-parameter subgroups of $G$. We say that $\lambda_1$ and $\lambda_2$ are \emph{equivalent} and write $\lambda_1 \sim \lambda_2$ if the the function $\lambda_1\lambda_2^{-1}: \G_m \to G$ extends to a regular function from $\mathbb{A}^1 $ to $G$, that is, if $\lim_{s \to 0} \lambda_1(s)\lambda_2(s)^{-1}$ exists in $G$. 
\end{definition}

We will see (Corollary \ref{cor-tilde-B-one-para}) that the equivalence classes of $\sim$ can be identified with $\tilde{\B}_\Z(G)$, the set of lattice points in the cone over the Tits building $\tilde{\B}(G)$ (Definition \ref{def-tilde-B}).

To a one-parameter subgroup $\lambda: \G_m \to G$ there corresponds a parabolic subgroup $P_\lambda \subset G$ defined as follows (see \cite[Section 2.2, Definition 2.3/Proposition 2.6]{Mumford}, \cite[Chapter 21, Section d]{Milne}):
$$P_\lambda = \{ g \in G \mid \lim_{s \to 0} \lambda(s) g \lambda(s)^{-1} \textup{ exists in }  G \}.$$

Fix a faithful representation $G \hookrightarrow \GL(E)$ where $E$ is a finite dimensional vector space. The one-parameter subgroup $\lambda$ then gives a $\G_m$-action on $E$. Let $E = \bigoplus_{i=1}^k W_i$ be the weight space decomposition of $E$ and let $c_i \in \Z$ be the weight of the weight space $W_i$. We assume $c_1 > \cdots > c_k$. This gives rise to a flag of subspaces $$F_\bullet = (\{0\} \subset F_1 \subset \cdots \subset F_k = E),$$ where $F_j = \bigoplus_{i=0}^j W_i$.
The stabilizer of the flag $F_\bullet$ in $\GL(E)$ is a parabolic subgroup of $\GL(E)$. One can show that the parabolic subgroup $P_\lambda$ is the stabilizer of $F_\bullet$ in $G$. The following proposition gives alternative characterizations of the equivalence of one-parameter subgroups. Item (c) in Proposition \ref{prop-1-para-equiv} is used  in \cite[Section 2.2, Definition 2.5]{Mumford} to define a variant of the equivalence $\sim$ of one-parameter subgroups.

\begin{proposition}    \label{prop-1-para-equiv}
Let $\lambda_1$, $\lambda_2$ be one-parameter subgroups of $G$. Fix a faithful representation $G \hookrightarrow \GL(E)$. For $i=1, 2$, let $F_{i, \bullet}$ and $c^i_1 > \cdots > c^i_k$ be the flag and weights associated to the linear action of $\lambda_i$ on $E$ respectively. The following are equivalent:
\begin{itemize}
\item[(a)] $\lambda_1 \sim \lambda_2$.
\item[(b)] $\lambda_1$ and $\lambda_2$ have the same flags and the set of weights, that is, $F_{1, \bullet} = F_{2, \bullet}$ and  $c^1_j = c^2_j$, $j=1, \ldots, k$. 
\item[(c)]	There exists $g \in P_{\lambda_1}$ such that $\lambda_2 = g \lambda_1 g^{-1}$.
\end{itemize}
\end{proposition}   
We need the following lemma in the proof of Proposition \ref{prop-1-para-equiv}.
\begin{lemma}    \label{lem-1-para-subgp}
Let $\lambda: \G_m \to \GL(E)$ be a one-parameter subgroup with the corresponding flag $F_\bullet$ and let $\lambda$ be diagonal in a basis $B$ for $E$. Let $\lambda'$ be another one-parameter subgroup which fixes the flag $F_\bullet$. Then there is $x \in P_\lambda$ such that $x\lambda'x^{-1}$ is diagonal in the basis $B$.	
\end{lemma}
\begin{proof}
For each $j=1, \ldots, k$, let $B_j = B \cap F_j$, where $F_j$ is the $j$-th subspace in the flag. Then $B_j$ is a basis for $F_j$. We know $\lambda'$ fixes every subspace $F_j$. Let $B_1'$ be a basis for $F_1$ consisting of weight vectors for $\lambda'$. Extend $B_1'$ to a basis $B_2'$ for $F_2$ consisting of weight vectors for $\lambda'$. Continuing this way we arrive at a basis $B'$ for $E$ of weight vectors for $\lambda'$. Now let $x \in \GL(E)$ be a linear transformation which sends $B'_j$ to $B_j$, $j=1, \ldots, k$. Then $x \in P_\lambda$ because it fixes the flag $F_\bullet$. One sees from the construction that $x\lambda'x^{-1}$ is diagonal in the basis $B$.
\end{proof}
\begin{proof}[Proof of Proposition \ref{prop-1-para-equiv}]
{It is well-known that given two flags in $E$ one can find a basis $B$ that is \emph{adapted} to both of the flags, that is, any subspace appearing in either flag is spanned by a subset of $B$. This is in fact one of the axioms in the definition of a building for the Tits building of $\GL(E)$ (see Definition  \ref{def-building}(c) and Example \ref{ex-building-GL_r}) and can be deduced as a corollary of the proof of Jordan-H\"older theorem (see \cite[Section 4.3]{Abramenko-Brown}).}
On the other hand, by Lemma  \ref{lem-1-para-subgp} we can find $x_i \in P_{\lambda_i}$, $i=1, 2$, such that both $\lambda_i' = x_i \lambda_i x_i^{-1}$ are diagonal in the basis $B$. Now, for $i=1, 2$, since $x_i \in P_{\lambda_i}$, we have $\lim_{t \to 0} \lambda_i(t) x_i \lambda_i(t)^{-1}$ exists. This implies that $\lim_{t \to 0} \lambda_i(t) \lambda'_i(t)^{-1}$ also exists which means $\lambda_i \sim \lambda_i'$. Thus, without loss of generality, we can replace $\lambda_i$ with $\lambda_i'$. Since $\lambda'_1$ and $\lambda'_2$ are diagonal in the same basis, one observes that $\lambda'_1 \sim \lambda'_2$ if and only if $\lambda'_1=\lambda'_2$. 
The proposition is straightforward to verify for $\lambda'_1$ and $\lambda'_2$. That is, $\lambda'_1 = \lambda'_2$ if and only if they have the same flags and the same set of weights, if and only if $\lambda'_2 = g \lambda'_1 g^{-1}$ for some $g \in P_{\lambda'_1}$. This concludes the proof. 
\end{proof}

Let $H \subset G$ be a maximal torus. It is well-known that the parabolic subgroups $P$ containing $H$ are in one-to-one correspondence with the faces of Weyl chambers in the vector space $\Lambda^\vee_\R(H)$. For a parabolic subgroup $P \subset G$ we let $\Lambda^\vee_P(H)$ denote the set of one-parameter subgroups in $\Lambda^\vee(H)$ that lie on the face  corresponding to $P$. That is, $\Lambda^\vee_P(H)$ consists of one-parameter subgroups in $\Lambda^\vee(H)$ that act on $\Lie(P)$ with non-negative weights. Alternatively, we have:
$$\Lambda^\vee_P(H) = \{ \lambda \in \Lambda^\vee(H) \mid P_\lambda = P \}.$$

Given two maximal tori $H$, $H'$ in $P$, the next proposition gives a natural linear isomorphism between the faces of Weyl chambers in $\Lambda^\vee_\R(H)$ and $\Lambda^\vee_\R(H')$ corresponding to the parabolic subgroup $P$. 
\begin{proposition}  \label{prop-P-lambda}
Let $P \subset G$ be a parabolic subgroup. 
\begin{itemize}
\item[(a)] Let $H$, $H'$ be maximal tori in $P$. Then for every $\lambda \in \Lambda^\vee_P(H)$ there is a unique $\lambda' \in \Lambda^\vee(H')$ such that $\lambda \sim \lambda'$.  	
\item[(b)] Moreover, $\lambda \mapsto \lambda'$ extends to a linear isomorphism from the cone associated to $P$ in $\Lambda_\R^\vee(H)$ to the cone associated to $P$ in $\Lambda_\R^\vee(H')$.
\end{itemize}
\end{proposition}
\begin{proof}	
(a) Since maximal tori are conjugate, there exists $g \in P = P_\lambda$ such that $gHg^{-1} = H'$. Let $\lambda' = g \lambda g^{-1}$. Then $\lambda' \in \Lambda^\vee(H')$ and $\lambda \sim \lambda'$ by Proposition  \ref{prop-1-para-equiv}(c). Also, no two one-parameter subgroups in $\Lambda^\vee(H')$ are equivalent which proves the uniqueness of $\lambda'$. (b) Clearly $\lambda \mapsto g \lambda g^{-1}$ is a group isomorphism from $\Lambda^\vee(H)$ to $\Lambda^\vee(H')$ and maps $\Lambda^\vee_P(H)$ onto $\Lambda^\vee_P(H')$.	
\end{proof}

For a parabolic subgroup $P$, let $\Lambda^\vee_P \subset \tilde{\B}_\Z(G)$ be the set of equivalence classes of one-parameter subgroups of $G$ that act on $\Lie(P)$ by non-negative weights. Alternatively:
$$ \Lambda^\vee_P = \{ \lambda \mid P_\lambda = P\} /  \sim.$$
It follows from Proposition \ref{prop-P-lambda} that, for each maximal torus $H \subset P$, the set $\Lambda^\vee_P$ can be identified with $\Lambda^\vee_P(H)$, the set of lattice points in the face associated to $P$.

The following is the main conclusion of this section and is an immediate consequence of Proposition \ref{prop-P-lambda}.
\begin{corollary}   \label{cor-tilde-B-one-para}
The set $\tilde{\B}_\Z(G)$ of lattice points in the cone over the Tits building (Definition \ref{def-tilde-B}) can be identified with the set of equivalence classes of  one-parameter subgroups of $G$. 
\end{corollary}

\begin{remark}
The above realization of the Tits building of $G$ in terms of equivalence classes of one-parameter subgroups (Corollary \ref{cor-tilde-B-one-para}) is analogous to the description of the Tits building of a symmetric space as the set of equivalence classes of geodesics (see \cite[Section 3]{Ji}).
\end{remark}







A homomorphism of algebraic groups induces a map between the corresponding cones over the Tits buildings. Realizing {the set of lattice points in} the cone over the Tits building as a quotient of space of one-parameter subgroups gives an easy way to construct this map. 
\begin{definition}   \label{def-B(G)-B(G')}
Let $\alpha: G \to G'$ be a homomorphism of linear algebraic groups. If $\lambda: \G_m \to G$ is a one-parameter subgroup of $G$ then $\alpha \circ \lambda: \G_m \to G'$ is a one-parameter subgroup of $G'$. It follows from the definition that if $\lambda \sim \lambda'$ then $\alpha \circ \lambda \sim \alpha \circ \lambda'$. Hence $\lambda \mapsto \alpha \circ \lambda$ gives a well-defined map $\hat{\alpha}: \tilde{\B}(G) \to \tilde{\B}(G')$. If $H \subset G$ is a maximal torus then $\alpha(H)$ is contained in a maximal torus $H'$ of $G'$ and hence $\hat{\alpha}(\tilde{A}_H) \subset \tilde{A}_{H'}$. 
\end{definition}

\subsection{Examples}
In this section we describe (the cones over) the Tits buildings of the general linear group, the symplectic group and the even orthogonal group.
\begin{example}[Tits building of the general linear group]   \label{ex-building-GL_r}
Let $E$ be a $\k$-vector space of dimension $r$. The (cone over the) Tits building of $G=\GL(E)$ has a nice concrete description as follows. Each one-parameter subgroup $\lambda: \mathbb{G}_m \to \GL(E)$ is a diagonal matrix $\operatorname{diag}(t^{v_1}, \ldots, t^{v_r})$, $v_i \in \Z$, in some basis $B = \{b_1, \ldots, b_r\}$ for $E$. After reordering the basis elements if necessary we can assume $v_1 \geq \cdots \geq v_r$. The ordered basis $B$ gives rise to a complete flag of subspaces 
$$V_\bullet = (\{0\} \subsetneqq V_1 \subsetneqq \cdots \subsetneqq V_r = E),$$ 
where $V_i = \operatorname{span}\{b_1, \ldots, b_i\}$. 
Suppose
$$v_1 = \cdots = v_{i_1} > v_{i_1+1} = \cdots = v_{i_2} > \cdots  =v_{i_{k-1}} > v_{i_{k-1}+1} = \cdots = v_{i_k} = v_{r}.$$
Thus, $v_{i_1} > \cdots > v_{i_k} = v_r$ are the distinct numbers among the $v_i$. For $j=1, \ldots, k$, let $c_j = v_{i_j}$ and $F_j = V_{i_j}$ and put $c_\bullet =(c_1 > \cdots > c_k)$ and $F_\bullet = (\{0\} \subsetneqq F_1 \subsetneqq \cdots \subsetneqq F_k = E)$. We conclude that a one-parameter subgroup in $\GL(E)$ is uniquely determined by the pair $(F_\bullet, c_\bullet)$. Let us call $(F_\bullet, c_\bullet)$, where $c_\bullet = (c_1 > \cdots > c_k)$ is a decreasing sequence of real numbers, a {\it labeled flag}. The cone over the building $\tilde{\B}(\GL(E))$ can be realized as the collection of labeled flags in $E$. 

\begin{remark}  \label{rem-bldg-GL(r)-filtrations}
{Alternatively, we can think of the integral labeled flags (i.e. the points in $\tilde{\B}_\Z(\GL(E)) \subset \tilde{\B}(\GL(E))$)} as decreasing $\Z$-filtrations $(E_i)$ in $E$. Namely, given an integral labeled flag $(F_\bullet, c_\bullet)$ the corresponding filtration $(E_i)$ is defined as follows: we let $E_{i} = F_j$ whenever $c_{j} \geq i > c_{j+1}$, $j=1, \ldots, k-1$. If $c_k \geq i$ we let $E_{i} = E$, and if $i > c_1$ we let $E_{i} = \{0\}$. 
\end{remark}

A {\it frame} in $E$ is a direct sum decomposition $$E = \bigoplus_{i=1}^r L_i$$ into one-dimensional subspaces. Maximal tori in $\GL(E)$ are in one-to-one correspondence with frames in $E$. For a frame $L = \{L_1, \ldots, L_r\}$ giving a maximal torus $H$, the corresponding cone over the apartment $\tilde{A}_L = \tilde{A}_H$ is the $r$-dimensional $\R$-vector space of all functions from the finite set $\{L_1, \ldots, L_r\}$ to $\R$. We say that a flag $F_\bullet$ is {\it adapted} to a frame $L$ if every subspace $F_i$ in it is spanned by some of the one-dimensional subspaces in $L$.

The abstract simplicial complex, $\Delta(\GL(E))$ can be described as follows. Its vertices are the proper nontrivial vector subspaces of $E$. Two subspaces $U_1$ and $U_2$ are connected if one of them is a subset of the other. The $m$-simplices of $\Delta(\GL(E))$ are formed by sets of $m + 1$ mutually connected subspaces, namely a flag $F_\bullet = (\{0\} \subsetneqq F_1 \subsetneqq \cdots \subsetneqq F_{m+1} \subsetneqq F_{m+2} = E)$. Maximal connectivity is obtained by taking $r - 1$ proper nontrivial subspaces and the corresponding simplex corresponds to a complete flag.
\end{example}

\begin{example}[Tits buildings of the orthogonal and symplectic groups]   \label{ex-building-orth-symp}
Next let us describe (the cones over) the Tits buildings of the symplectic group and the even orthogonal group. The odd orthogonal group can be treated in a similar fashion. 
  
Let $E$ be a $2r$-dimensional vector space over $\k$. Let $\langle \cdot, \cdot \rangle$ be a non-degenerate symmetric or skew-symmetric bilinear form on $E$. When $\langle \cdot, \cdot \rangle$ is symmetric (respectively skew-symmetric) denote the subgroup of $\GL(E)$ preserving $\langle \cdot, \cdot \rangle$ by $\operatorname{O}(E)$ (respectively $\operatorname{Sp}(E)$). Throughout the rest of this example, $G$ denotes $\operatorname{O}(E)$ or $\operatorname{Sp}(E)$. 

Consider a partial flag of subspaces 
$$F_\bullet = (\{0\} \subsetneqq F_1 \subsetneqq \cdots \subsetneqq F_{k} = E).$$
We call $F$ an {\it isotropic flag} if for each $0 \leq j \leq k$ we have 
\begin{equation}   \label{equ-isotropy}
F_j^\perp = F_{k - j}.
\end{equation}
This implies that $F_j$ is an isotropic subspace, for $j=1, \ldots, \lfloor k/2 \rfloor$. Note that the $F_j$, $j=1, \ldots, \lfloor k/2 \rfloor$, determine the whole isotropic flag $F_\bullet$. We mention that in the literature sometimes an isotopic flag is defined to be a flag consisting of isotropic subspaces, that is, $F_1 \subsetneqq \cdots \subsetneqq F_{\lfloor k/2 \rfloor}$ in our notation. 

From linear algebra one knows that there exists a basis $B = \{e_1, \ldots, e_r, f_1, \ldots, f_r\}$ for $E$ such that:
\begin{align*}   \label{equ-normal-basis}
\langle e_i, e_j \rangle &= 0, ~\forall i, j \\
\langle e_i, f_i \rangle &= 1,  ~\forall i \\
\langle e_i, f_j \rangle &= 0, ~\forall i, j, ~ i \neq j. \\
\end{align*}
We call such a basis $B$ a {\it normal basis} for $(E, \langle \cdot, \cdot \rangle)$. Note that for any nonzero $t_1, \ldots, t_n \in \k$, $\{t_1 e_1, \ldots, t_r e_r, t_1^{-1} f_1, \ldots, t_r^{-1} f_r\}$ is also a normal basis. We call a normal basis up to multiplication by the $t_i$ a {\it normal frame}. The normal frames are in one-to-one correspondence with maximal tori of $G$. In fact, an ordered normal basis $(e_1, \ldots, e_r, f_r, \ldots, f_1)$ gives the maximal torus $T$ defined by:
$$T = \{ \operatorname{diag}(t_1, \ldots, t_r, t_r^{-1}, \ldots, t_1^{-1}) \mid t_1, \ldots, t_r \in \k^* \}.$$
We note that one can reorder $\{e_1, \ldots, e_n\}$ as well as switch any $e_i$ with $\pm f_i$ (depending on whether $\langle \cdot, \cdot \rangle$ is symmetric or skew-symmetric) and still have a normal basis. 
Each one-parameter subgroup $\lambda: \G_m \to G$ is given by a diagonal matrix $$\diag(t^{v_1}, \ldots, t^{v_r}, t^{-v_r}, \ldots, t^{-v_1}), ~~ v_i \in \Z,$$ in some ordered normal basis $B = (e_1, \ldots, e_r, f_r, \ldots, f_1)$. After reordering $v_1, \ldots, v_r$ and switching some of the $v_i$ with $-v_i$ if necessary, we can assume $$v_1 \geq \cdots \geq v_r \geq 0 \geq -v_r \geq \cdots \geq -v_1.$$ To simplify the notation, for $i=1, \ldots r$ we let $v_{r+i} = -v_{r+1-i}$.    
The ordered basis $B$ gives rise to a complete flag $(\{0\} \subset V_1 \subset \cdots \subset V_{2r} = E)$ defined by:
\begin{align*}
V_i &= \operatorname{span}\{e_1, \ldots, e_i \} \\
V_{r + i}  &= \operatorname{span}\{e_1, \ldots, e_r, f_r, \ldots, f_{r+1-i}.\}, 
\end{align*}
where $i = 1, \ldots, r$. One observes that $V_i^{\perp} = V_{2r-i}$. 
Let the indices $1 \leq i_1 < \ldots < i_k = 2r$ be such that:
$$v_1 = \cdots = v_{i_1} > v_{i_1+1} = \cdots = v_{i_2} > \cdots v_{i_{k-1}} > v_{i_{k-1}+1} = \cdots v_{i_k} = -v_{1}.$$ That is, $v_{i_1} > \cdots > v_{i_k}$ are the distinct numbers among the $\pm v_i$. For $j=1, \ldots, k$ put $c_j = v_{i_j}$ and $F_j = V_{i_j}$. We thus get an isotropic flag $F_\bullet = (\{0\} \subsetneqq F_1 \subsetneqq \cdots \subsetneqq F_{k} = E)$ and a labeling $c_\bullet = (c_1 > \cdots > c_{k})$ with $c_j = -c_{k+1-j}$. Let us call $(F_\bullet, c_\bullet)$, where $c_\bullet = (c_1 > \cdots > c_{k})$ is a decreasing sequence of real numbers with $c_{j}=-c_{k+1-j}$, a {\it labeled isotropic flag}. The cone over the building $\tilde{\B}(G)$ can be realized as the collection of labeled isotropic flags. We say that an isotropic flag $F_\bullet$ is {\it adapted} to a normal frame $L$ if every subspace $F_i$ in it is spanned by some of the one-dimensional subspaces in $L$.

The abstract simplicial complex $\Delta(G)$ can be described in a similar fashion to that of $\GL(E)$.
The simplices in $\Delta(G)$ correspond to isotropic flags $F_\bullet$ and apartments correspond to normal frames. A simplex is contained in an apartment if the corresponding isotropic flag is adapted to the corresponding normal basis.

{Alternatively, in a similar way as explained in Remark \ref{rem-bldg-GL(r)-filtrations}, the integral labeled isotropic flags (i.e. the points in $\tilde{\B}_\Z(G) \subset \tilde{\B}(G)$) can be realized as decreasing $\Z$-filtrations on $E$ consisting of isotropic subspaces.} 
\end{example}

\subsection{Tits building of the general linear group and prevaluations}  \label{subsec-buildings-preval} 
A useful realization of the (cone over the) Tits building of $\GL(E)$ is as the space of all \emph{prevaluations} on $E$. The authors have not been able to find a reference for this realization in the literature. This is an analogue of the Goldman-Iwahori realization of the \emph{Bruhat-Tits building} of $\GL(E)$ as the space of non-Archimedean norms on $E$ (see \cite[Section 1.2]{RTW} or \cite{JSY}). We recall that to a linear algebraic group $G$ over a discretely valued field, there corresponds an affine building called its Bruhat-Tits building.

\begin{definition}
Let $E$ be a finite dimensional $\k$-vector space.
We call a function $v: E\setminus \{0\} \to \R$ a {\it prevaluation} if the following hold:
\begin{itemize}
\item[(1)] For all $0 \neq e \in E$ and $0 \neq c \in \k$ we have $v(ce) = v(e)$. 
\item[(2)](Non-Archimedean property) For all $0 \neq e_1, e_2 \in E$, $e_1+e_2 \neq 0$, the non-Archimedean inequality $v(e_1+e_2) \geq \min\{v(e_1), v(e_2)\}$ holds. 
\end{itemize}
\end{definition}
It is convenient to extend $v$ to the whole $E$ and define $v(0) = \infty$. We call a prevaluation $v$ {\it integral} if it attains only integer values, i.e. $v: E \setminus \{0\} \to \Z$. The term {\it prevaluation} is taken from the paper \cite[Section 2.1]{KKh-Annals}.  Prevaluation is a standard commutative algebra notion although in most of the literature the term \textit{valuation on a vector space} is used. We use the term prevaluation to distinguish it from valuations on rings. 

The {\it value set} $v(E)$ of a prevaluation $v$ is the image of $E \setminus \{0\}$ under $v$, i.e.
$$v(E) = \{ v(e) \mid 0 \neq e \in E\}.$$ It is easy to verify that $|v(E)| \leq \dim(E)$ and hence $v(E)$ is finite. Each integral prevaluation $v$ on $E$ gives rise to a filtration $E_v = (E_{v \geq a})_{a \in \Z}$ on $E$ by vector subspaces defined by: 
$$E_{v \geq a} = \{ e \in E \mid v(e) \geq a\}.$$
If $v(E) = \{a_1 > \cdots > a_k\}$ then we have an integral labeled flag $F_\bullet = (F_{v \geq a_1} \subsetneqq \cdots \subsetneqq F_{v \geq a_k})$ where $F_{v \geq a_i}$ is labeled with $a_i$. 
Conversely, a decreasing filtration $E_\bullet = (E_a)_{a \in \Z}$ such that $\bigcap_{a \in \Z} E_a = \{0\}$ defines a prevaluation $v_{E_\bullet}$ by:
$$v_{E_\bullet}(e) = \max\{ a \in \Z \mid e \in E_a\},$$ for all $e \in E$. 
The assignments $v \mapsto E_v$ and $E_\bullet \mapsto v_{E_\bullet}$ are inverse of each other and give a one-to-one correspondence between the set of integral prevaluations on $E$ and the set of decreasing  $\Z$-filtrations on $E$.

Recall that a frame $L = \{L_1, \ldots, L_r\}$ is a collection of $1$-dimensional subspaces $L_i$ such that $E = \bigoplus_{i=1}^r L_i$. We say that a frame $L$ is {\it adapted} to a prevaluation $v$ if every subspace $F_{v \geq a}$ is a sum of some of the $L_i$. This is equivalent to the following: For any $0 \neq e \in E$ let us write $e = \sum_i e_i$ where $e_i \in L_i$. Then:
$$v(e) = \min\{ v(e_i) \mid e_i \neq 0 \}.$$

The set of prevaluations $v: E \setminus \{0\} \to \R$ can naturally be identified with $\tilde{\B}(\GL(E))$, the cone over the building of $\GL(E)$. 
Given a frame $L$ the corresponding cone over the apartment $\tilde{A}_L$ consists of all prevaluations adapted to $L$.

\section{Toric principal bundles and Tits buildings}
\label{sec-G-bundle-PLM}
Let $G$ be a linear algebraic group over $\k$. As usual we denote the cone over the Tits building of $G$ by $\tilde{\B}(G)$. 
Recall that for each maximal torus $H \subset G$ we identify the corresponding cone over the apartment $\tilde{A}_H$ with the $\R$-vector space $\Lambda^\vee_\R(H) = \Lambda^\vee(H) \otimes \R$, where $\Lambda^\vee(H)$ denotes the cocharacter lattice of $H$. 

Throughout, $\Sigma$ is a fan in an $\R$-vector space $N_\R$ with support $|\Sigma|$. 

\begin{definition}[Piecewise linear map to $\tilde{\B}(G)$]   \label{def-plm-to-B(G)}
We say that a map $\Phi: |\Sigma| \to \tilde{\B}(G)$ is a {\it piecewise linear map} if the following hold:
\begin{itemize}
\item[(1)] For each cone $\sigma \in \Sigma$ the image $\Phi(\sigma)$ lies in a cone over an apartment $\tilde{A}_\sigma = \Lambda_\R^\vee(H_\sigma)$ (which of course is not necessarily unique). Here $H_\sigma \subset G$ is the maximal torus corresponding to $\tilde{A}_\sigma$. 
\item[(2)] For each $\sigma \in \Sigma$ the restriction 
$\Phi_\sigma := \Phi_{|\sigma}: \sigma \to \tilde{A}_\sigma$ is an $\R$-linear map.
\end{itemize}
We say that $\Phi$ is {\it integral} if for every cone $\sigma$, the map $\Phi_\sigma$ restricts to give a $\Z$-linear map $\Phi_\sigma: \sigma \cap N \to \Lambda^\vee(H_\sigma)$. We note that $N = \Lambda^\vee(T)$ is the cocharacter lattice of $T$ and hence such a linear map is in fact the derivative of a group homomorphism $\phi_\sigma: T_\sigma \to H_\sigma$ where $T_\sigma \subset T$ is the subtorus whose cocharacter lattice is generated by $\sigma \cap N$.
\end{definition}

Let $\alpha: G \to G'$ be a homomorphism of linear algebraic groups. Recall that $\alpha$ induces a map $\hat{\alpha}: \tilde{\B}(G) \to \tilde{\B}(G')$ (see Definition \ref{def-B(G)-B(G')}). If $\Phi: |\Sigma| \to \tilde{\B}(G)$ is a piecewise linear map, it is clear that $\Phi' = \hat{\alpha} \circ \Phi: |\Sigma| \to \tilde{\B}(G')$ is a piecewise linear map as well. 


Recall that a principal $G$-bundle over a variety $X$ is a fiber bundle $\P$ over $X$ with an action of $G$ such that $G$ maps each fiber to itself and the action of $G$ on each fiber is {free and transitive}, that is, every fiber can be considered as a copy of $G$. We will write the action of $G$ on $\P$ as a right action. Let $G$, $G'$ be algebraic groups. Let $\P$ (respectively $\P'$) be a principal $G$-bundle (respectively $G'$-bundle) over $X$. A {\it morphism of principal bundles} is a bundle map $F: \P \to \P'$ with respect to a homomorphism $\alpha: G \to G'$ if the following equivariance condition holds: For any $z \in \P$ and $g \in G$,
$$F(z \cdot g) = F(z) \cdot \alpha(g).$$

With our usual notation, let $X_\Sigma$ be the toric variety associated to a fan $\Sigma$. Following \cite{Biswas} we say that a principal $G$-bundle $\P$ over a toric variety $X_\Sigma$ is a {\it toric principal $G$-bundle} if $T$ acts on $\P$ lifting its action on $X_\Sigma$ in such a way that the $T$-action and the $G$-action on $\P$ commute (we will write the $T$-action on the left and the $G$-action on the right). By a {\it framed} toric principal $G$-bundle we mean a toric principal $G$-bundle $\P$ together with a choice of a point $p_0 \in \P_{x_0}$. The choice of $p_0 \in \P_{x_0}$ is equivalent to fixing a $G$-isomorphism between $\P_{x_0}$ and $G$ (where $G$ acts on itself via multiplication from right).
A {\it morphism of toric principal bundles} is a morphism of principal bundles that is also $T$-equivariant. A {\it morphism of framed principal bundles} is a morphism $F$ that sends the distinguished point $p_0 \in \P_{x_0}$ to the distinguished point $p'_0 \in \P'_{x_0}$. In other words, $F: \P_{x_0} \to \P'_{x_0}$ coincides with $\alpha: G \to G'$ after identifying $\P_{x_0}$, $\P'_{x_0}$ with $G$, $G'$ respectively. 


If $G$ is a subgroup of $\GL(E)$ then 
one defines the {\it associated vector bundle} $\E$ of $\P$ as the fiber product $\E = \P \times_G E$. It is straightforward to verify that it is indeed a toric vector bundle.

The next theorem is the main result of the paper. 

\begin{theorem}  \label{th-principal-G-bundle-plm}
Let $X_\Sigma$ be a toric variety and $G$ a reductive algebraic group over $\k$. There is a one-to-one correspondence between the isomorphism classes of framed toric principal $G$-bundles $\P$ over $X_\Sigma$ and the integral piecewise linear maps $\Phi:|\Sigma| \to \tilde{\B}(G)$. 

{Moreover, let $\alpha:G \to G'$ be a homomorphism of reductive algebraic groups. Let $\mathcal{P}$ (respectively $\mathcal{P}'$) be a framed toric principal $G$-bundle (respectively $G'$-bundle) with corresponding piecewise linear map $\Phi: |\Sigma| \to \tilde{\B}(G)$ (respectively $\Phi': |\Sigma| \to \tilde{\B}(G')$). Then there is a morphism of framed toric principal bundles $F: \mathcal{P} \to \mathcal{P}'$, that is equivariant with respect to $\alpha$, if and only if $\Phi' = \alpha_*(\Phi)$.}
	
When the base field $\k$ is $\C$, the theorem holds for any linear algebraic group $G$. 
\end{theorem}


In fact, we prove a more general version of Theorem 
\ref{th-principal-G-bundle-plm} that does not require $G$ to be reductive (Theorem \ref{th-principal-G-bundle-plm-v2} below). Most of the rest of this section is devoted to its proof. Theorem \ref{th-principal-G-bundle-plm} follows from Theorem \ref{th-principal-G-bundle-plm-v2}. We need the following definition.

\begin{definition}[Locally equivariantly trivial toric principal bundle] \label{def-local-equiv-trivial}
Let $G$ be a linear algebraic group. 
(1) Let $\P$ be a toric principal $G$-bundle on an affine toric variety $X_\sigma$. We say that $\P$ is \textit{equivariantly trivial} if there is a $T$-equivariant principal $G$-bundle isomorphism between $\P$ and a trivial toric principal bundle $X_\sigma \times G$ where $T$ acts on $X_\sigma \times G$ diagonally, acting on $X_\sigma$ in the usual way and on  $G$ via a homomorphism $T \to G$. That is, for any $t \in T$, $x \in X_\sigma$ and $g \in G$ we have: 
\begin{equation}  \label{equ-equiv-trivial}
t \cdot (x, g) = (t \cdot x, \phi_\sigma(t) g). 
\end{equation}
(2) Let $\P$ be a toric principal $G$-bundle on a toric variety $X_\Sigma$. We say that $\P$ is \textit{locally equivariantly trivial} if for each cone $\sigma \in \Sigma$, the restricted bundle $\P_{|X_\sigma}$ is equivariantly trivial.   
\end{definition}

\begin{theorem}  \label{th-principal-G-bundle-plm-v2}
Let $G$ be a linear algebraic group over $\k$. There is a one-to-one correspondence between the isomorphism classes of framed toric principal $G$-bundles $\P$ over $X_\Sigma$ that are locally equivariantly trivial and the integral piecewise linear maps $\Phi:|\Sigma| \to \tilde{\B}(G)$. 

{Moreover, let $\alpha:G \to G'$ be a homomorphism of linear algebraic groups. Let $\mathcal{P}$ (respectively $\mathcal{P}'$) be a locally equivariantly trivial framed toric principal $G$-bundle (respectively $G'$-bundle) with corresponding piecewise linear map $\Phi: |\Sigma| \to \tilde{\B}(G)$ (respectively $\Phi': |\Sigma| \to \tilde{\B}(G')$). Then there is a morphism of framed toric principal bundles $F: \mathcal{P} \to \mathcal{P}'$, that is equivariant with respect to $\alpha$, if and only if $\Phi' = \alpha_*(\Phi)$.}
\end{theorem}


{Theorem \ref{th-principal-G-bundle-plm} follows from Theorem \ref{th-principal-G-bundle-plm-v2} provided that we show the equivariant triviality of toric principal $G$-bundles over affine toric varieties holds for reductive $G$. This is proved in \cite[Theorem 4.1]{Biswas-Tannakian} which itself relies on \cite[Prop. 8.5]{BR}. We state this result below and give a sketch of its proof following \cite[Theorem 4.1]{Biswas-Tannakian}.}

{
\begin{theorem}  \label{th-equiv-trivial-affine}
Let $G$ be a reductive group. Let $\P$ be a toric principal $G$-bundle over an affine toric variety $X_\sigma$. Then $\P$ is equivariantly trivial. 
\end{theorem}
\begin{proof}[Sketch of proof]
Without loss of generality one can assume that $X_\sigma$ contains a (unique) torus fixed point $x_\sigma$. Then the $(T \times G)$-variety $\P$ contains a unique closed $(T \times G)$-orbit, namely the fiber $\P_{x_\sigma}$. Take $p_\sigma \in \P_{x_\sigma}$. One verifies that the first projection $T \times G \to T$ maps the stabilizer subgroup $(T \times G)_{p_\sigma}$ isomorphically to $T$, and hence there is a unique homomorphism $\phi: T \to G$ such that $(T \times G)_{p_\sigma} = \{(t, \phi(t)) \mid t \in T\}$.
In particular, the stabilizer $(T \times G)_{p_\sigma}$ is reductive. We can now apply the slice theorem in \cite[Prop. 8.5]{BR} to conclude that $\P$ is a fiber product $F \times_T (T \times G)$, for some affine $T$-variety $F$. Here $T$ acts on $T \times G$ by multiplication from left via $t \mapsto (t, \phi(t))$. One then sees that $F \cong \P / G \cong X_\sigma$ and $\P \cong X_\sigma \times G$ as required. 
\end{proof}
}	 
 
\begin{proof}[Proof of Theorem \ref{th-principal-G-bundle-plm-v2}]
For a cone $\sigma \in \Sigma$ let $T_\sigma$ be the stabilizer of the $T$-orbit $O_\sigma$. 
The subgroup $T_\sigma$ is generated by one-parameter subgroups corresponding to points in $\sigma \cap N$. Note that when $\sigma$ is full dimensional $T_\sigma=T$. We also let $x_\sigma \in O_\sigma$ be the  point in the closure of $T_\sigma \cdot x_0$. 

Let $\Phi: |\Sigma| \to \tilde{\B}(G)$ be an integral piecewise linear map. By assumption, for each cone $\sigma \in \Sigma$, the restriction $\Phi_{|\sigma}: \Lambda^\vee(T_\sigma) \to \Lambda^\vee(H_\sigma)$ is a $\Z$-linear map. Let $$\phi_\sigma: T_\sigma \to H_\sigma \subset G$$ be the homomorphism whose derivative at identity is $\Phi_{|\sigma}$. 

We prove the theorem in several steps.

{\bf Step 1:} Given an integral piecewise linear map $\Phi: |\Sigma| \to \tilde{\B}(G)$ we would like to construct a toric principal $G$-bundle $\P_\Phi$ over $X_\Sigma$. We construct $\P_\Phi$ by gluing equivariantly trivial $G$-bundles over affine toric charts. 

Take a cone $\sigma \in \Sigma$ with $X_\sigma$ its associated affine toric variety. Consider an arbitrary extension of the homomorphism $\phi_\sigma$ to the whole $T$, that is, $\phi_\sigma: T \to H_\sigma$. Let $$\P_\sigma = X_\sigma \times G$$ be the trivial $G$-bundle on $X_\sigma$ where $G$ acts on the second component by multiplication from the right.
Define an action of $T$ on $X_\sigma \times G$ by letting $T$ act on $X_\sigma \times G$ diagonally where it acts on $G$ by multiplication from left via the homomorphism $\phi_\sigma$ (see \eqref{equ-equiv-trivial}). 
We have defined toric principal bundles $P_\sigma$ on the affine charts $X_\sigma$, now we define gluing maps. Let $\sigma, \sigma'$ be two cones with $\tau = \sigma \cap \sigma'$. 
We would like to define a transition map $\psi = \psi_{\sigma, \sigma'}: X_\tau \to G$ so that the morphism $\tilde{\psi} = \tilde{\psi}_{\sigma, \sigma'}: {\P_\sigma}_{|X_\tau} \to {\P_{\sigma'}}_{|X_\tau}$ given by
$$\tilde{\psi}(x, g) = (x, \psi(x) g),$$ intertwines the $T$-actions on $\P_\sigma$ and $\P_{\sigma'}$. This is equivalent to the following: 
\begin{equation}   \label{equ-psi-x-t}
\psi(t \cdot x) \phi_\sigma(t) \psi(x)^{-1} = \phi_{\sigma'}(t),~ \forall t \in T,~\forall x \in X_\tau = X_\sigma \cap X_{\sigma'}.
\end{equation}
Recall that we fixed a point $x_0$ in the open orbit $X_0$. Then \eqref{equ-psi-x-t} is equivalent to: 
\begin{equation}   \label{equ-psi-x0}
\psi(t \cdot x_0) \phi_\sigma(t) \psi(x_0)^{-1} = \phi_{\sigma'}(t),~\forall t \in T.
\end{equation}
To construct such a transition map $\psi: X_\tau \to G$ we define $\psi$ on the open torus orbit $X_0$ by:
\begin{equation}  \label{equ-def-psi}
\psi(t \cdot x_0) = \phi_{\sigma'}(t) \phi_{\sigma}(t)^{-1},~\forall t \in T.
\end{equation}
Note that, in particular, we set $\psi(x_0)=1$. Letting $\psi(x_0)=1$ amounts to choosing a framing, that is, identifying the fiber at $x_0$ of $\P_{\Phi}$ with $G$. It is clear that the equality \eqref{equ-psi-x0} holds. 
Moreover, from the definition (Equation \eqref{equ-def-psi}) one sees that the $\tilde{\psi}_{\sigma, \sigma'}$ satisfy the cocycle condition. 

We only need to show that $\psi$ extends to a regular map on the whole $X_\tau$. This follows from the next lemma.
 \begin{lemma}  \label{lem-extension}
 With notation as above, let $\phi, \phi': T \to \GL(E)$ be linear representations of $T$ on a finite dimensional vector space $E$. The function $\psi(t \cdot x_0) = \phi'(t) \phi^{-1}(t)$ extends to a regular function $X_\tau \to \GL(E)$, 
 if and only if for every one-parameter subgroup $\lambda_a$, $a \in \tau \cap N$, the one-parameter subgroups $\phi \circ \lambda_a$ and $\phi' \circ \lambda_a$ are equivalent, i.e. the function $(\phi' \circ \lambda_a)(\phi \circ \lambda_a)^{-1}: \G_m \to \GL(E)$ extends to a regular function $\mathbb{A}^1 \to \GL(E)$. 
 \end{lemma}
\begin{proof}
	The ``only if'' direction is obvious, we prove the other direction.
	Let $B=\{b_1, \ldots, b_r\}$ (respectively $B'=\{b'_1, \ldots, b'_r\}$) be a basis of $T$-weight vectors for $\phi$ (respectively $\phi'$). For every $i$, let $u_i$ (respectively $u'_i$) be the weight of  $b_i$ (respectively $b'_i$). Let us write $b_i = \sum_j c_{ij} b'_j$. Then, for $a \in \tau \cap N$, we have:
	$$\phi'(\lambda_a(s))\phi(\lambda_a(s))^{-1}(b_i) = \sum_j c_{ij} s^{\langle u'_j - u_i, a \rangle} b'_j.$$
	This function extends to a regular function $\mathbb{A}^1 \to \End(E)$ implies that whenever $c_{ij} \neq 0$ we have $\langle u'_j - u_i, a \rangle \geq 0$. 
	On the other hand, we have:
	$$\phi'(t) \phi^{-1}(t)(b_i) = \sum_j c_{ij} \chi^{u'_j - u_i} b'_j.$$
	But a character $\chi^{u'_j - u_i}$ extends to a regular function on $X_\tau$ if and only if $\langle u'_j-u_i, a\rangle \geq 0$, for all $a \in \tau \cap N$. This shows that $\phi'\phi^{-1}$ extends to $X_\tau \to \End(E)$. Since  the same applies to $(\phi' \phi^{-1})^{-1} = \phi \phi'^{-1}$, we conclude that the image of extension of $\phi'\phi^{-1}$ to $X_\tau$ lies in $\GL(E)$. This proves the lemma.
\end{proof}
Fix a faithful representation $G \hookrightarrow \GL(E)$. Lemma \ref{lem-extension} shows that $\psi=\psi_{\sigma, \sigma'}: X_0 \to G \hookrightarrow \GL(E)$ extends to a regular function $\psi: X_\tau \to \GL(E)$. But $G$ is closed in $\GL(E)$ and thus the image of the extension $\psi$ lands in $G$. 
This finishes the construction of $\mathcal{P}_\Phi$.

Note that the above construction of $\P_\Phi$ depends on the choice of extensions $\phi_\sigma:T \to H_\sigma$. It remains to be checked that different such choices give $T$-equivariantly isomorphic principal bundles. For each $\sigma \in \Sigma$ let $\phi'_\sigma: T \to H_\sigma$ be another choice of an extension of $\phi_\sigma$ to the whole $T$. Let $\P'_\sigma$ and $\P'_\Sigma$ denote the resulting toric principal $G$-bundles on $X_\sigma$ and $X_\Sigma$ respectively. As above one verifies that the map:
$$(t \cdot x_0, g) \mapsto (t \cdot x_0, \phi_\sigma'(t) \phi_\sigma(t)^{-1} g)$$ extends to a $T$-equivariant isomorphism $\P_\sigma \to \P'_\sigma$ and these glue together to give a $T$-equivariant isomorphism $\P_\Phi \to \P'_\Phi$.
 
{\bf Step 2:} In the other direction, we would like to associate an integral piecewise linear map $\Phi_\P: |\Sigma| \to \tilde{\B}(G)$ to a framed toric principal $G$-bundle $\P$ on $X_\Sigma$ that is locally equivariantly trivial.
The bundle $\P$ is determined by its restrictions $\P_{|X_\sigma}$, $\sigma \in \Sigma$, and its transition functions $\psi_{\sigma, \sigma'}: X_\sigma \cap X_{\sigma'} \to G$, $\sigma, \sigma' \in \Sigma$. The transition functions satisfy the usual cocycle condition as well as the equivariance condition \eqref{equ-psi-x-t}. 
 
For each $\sigma \in \Sigma$, the local equivariant triviality states that the bundle $\P_\sigma := \P_{|X_\sigma}$ is $T$-equivariantly isomorphic to the trivial bundle $X_\sigma \times G$ where $T$ acts on $G$ via a homomorphism $\phi_\sigma: T \to G$. The homomorphisms $\phi_\sigma$ in turn yield $\Z$-linear maps $\Phi_\sigma: N \to \tilde{\B}_\Z(G)$. We wish to show that the local equivariant trivializations can be chosen so that the resulting $\Phi_\sigma$ glue together to produce a piecewise linear map $\Phi_\P: |\Sigma| \to \tilde{\B}(G)$.

Under the local equivariant trivialization $\P_\sigma \xrightarrow{\cong} X_\sigma \times G$, let the distinguished point (the frame) $p_0 \in \P_{x_0}$ map to $(x_0, g_\sigma)$. After replacing $\phi_\sigma$ with $g_\sigma^{-1} \phi_\sigma g_\sigma$ we can assume that $p_0$ maps to $(x_0, 1)$, where $1$ denotes the identity element in $G$.
{After doing this for all $\sigma \in \Sigma$, 
we see that $\psi_{\sigma, \sigma'}(x_0) = 1$, for all $\sigma, \sigma' \in \Sigma$.} 
The equivariance condition \eqref{equ-psi-x-t}  then implies that $\psi_{\sigma, \sigma'}$ on the open orbit $X_0$ is given by \eqref{equ-def-psi}. We also know that $\psi_{\sigma, \sigma'}$ is defined on the whole $X_\tau = X_\sigma \cap X_{\sigma'}$, where $\tau = \sigma \cap \sigma'$. 
Since the stabilizer subgroup $T_\tau$ is generated by the one-parameter subgroups $\lambda_a$, $a \in \tau \cap N$, we see that for any $a \in \tau \cap N$, the limit 
$$\lim_{s \to 0} \psi_{\sigma, \sigma'}(\lambda_a(s) \cdot x_0) = \lim_{s \to 0} \phi_{\sigma'}(\lambda_a(s))\phi_{\sigma}(\lambda_a(s))^{-1}$$ exists. This means that $\phi_{\sigma} \circ \lambda_a$ and $\phi_{\sigma'} \circ \lambda_a$ are equivalent one-parameter subgroups which then implies that the linear maps $\Phi_\sigma$ and $\Phi_{\sigma'}$ coincide on the cone $\tau = \sigma \cap \sigma'$. In conclusion, the $\Phi_\sigma$, $\sigma \in \Sigma$, glue together to produce a piecewise linear map $\Phi_\P: |\Sigma| \to \tilde{\B}(G)$.

Finally, we show that the above construction of $\Phi_\P$ is well-defined, that is, it does not depend on the choice of local equivariant trivializations. We note that the piecewise linearity implies that $\Phi_\P$ is uniquely determined by its values on the rays in $\Sigma$. Thus it suffices to show that these values are determined by the bundle $\P$. For a ray $\rho \in \sigma(1)$, let $\lambda_{\rho}: \mathbb{G}_m\rightarrow T$ be the one-parameter subgroup corresponding to the primitive vector ${\bf v}_{\rho}\in\rho$. We would like to show that for any cone $\sigma$ and any ray $\rho \in \sigma(1)$, the one-parameter subgroup $\phi_{\sigma} \circ \lambda_{\rho}: \mathbb{G}_m\rightarrow G$ is uniquely determined, up to equivalence of one-parameter subgroups, by the bundle $\mathcal{P}$. Fix a faithful representation $G \hookrightarrow \GL(E)$ where $E$ is a  finite dimensional vector space. Let $\mathcal{E} = \mathcal{P} \times_G E$ be the associated toric vector
bundle. Notice that $\mathcal{P}_{\sigma}\times_GE\cong X_{\sigma}\times G\times_GE\cong X_{\sigma}\times E$ with the action of $T$ on $E$ induced by $\phi_{\sigma}$. As in Klyachko's classification (\cite[\S 2]{Klyachko}) it follows that the $\Z$-filtration, and hence the flag and weights, of the one-parameter subgroup $\phi_\sigma \circ \lambda_\rho$ acting on $E$ are uniquely determined by the toric bundle $\mathcal{P}$ (cf. Remark \ref{rem-bldg-GL(r)-filtrations} ). This shows that the equivalence class of $\phi_\sigma \circ \lambda_\rho$ is determined by $\mathcal{P}$ as required.  



{\bf Step 3:} Next we verify that $\Phi \mapsto \P_\Phi$ and $\P \mapsto \Phi_\P$ are inverses of each other. 
Let $\Phi$ be an integral piecewise linear map with corresponding toric principal bundle $\P = \P_\Phi$. As above, for each cone $\sigma$ let $\phi_\sigma: T \to G$ be an extension of the homomorphism $T_\sigma \to G$ corresponding to the linear map $\Phi_\sigma = \Phi_{|\sigma}$. By construction in Step 1, $\P_{|X_\sigma} = X_\sigma \times G$ and the $T$-action on this trivializing chart is given by $\phi_\sigma$. From construction in Step 2, it is immediate that the piecewise linear map associated to $\P$ is $\Phi$. 

Conversely, let $\P$ be a toric principal bundle with the corresponding piecewise linear map $\Phi = \Phi_\P$. Let $\sigma, \sigma' \in \Sigma$ with $\tau = \sigma \cap \sigma'$ and let $\psi = \psi_{\sigma, \sigma'}: X_\tau \to G$ be the corresponding transition function. As in Step 2, for all $\sigma, \sigma'$, we can arrange for $\psi(x_0)$ to be equal to $1$ and thus $\psi(t \cdot x_0) = \phi_{\sigma'}(t) \phi_\sigma(t)^{-1}$ by \eqref{equ-psi-x0}. The construction in Step 1 then shows that $\P$ coincides with $\P_\Phi$.

To finish the proof of Theorem \ref{th-principal-G-bundle-plm}, it remains to prove the claim about the equivalence of categories. 

{\bf Step 4:} 
Let $G$, $G'$ be linear algebraic groups and let $\Phi$, $\Phi'$ be piecewise linear maps to $\tilde{\B}(G)$, $\tilde{\B}(G')$ respectively. {Let $\alpha: G \to G'$ be a homomorphism and let $\Phi' = \alpha_*(\Phi)$.} We construct a $T$-equivariant morphism of principal bundles $F: \P \to \P'$ as follows. For each $\sigma \in \Sigma$ define $F_\sigma: X_\sigma \times G \to X_\sigma \times G'$ by: 
\begin{equation}  \label{equ-alpha-F}
F_\sigma(x, g) = (x, \alpha(g)).
\end{equation}
One verifies that the $F_\sigma$ glue together to give a well-defined morphism $F: \P \to \P'$ and $F$ is $T$-equivariant. 

{\bf Step 5:} Conversely, let $\alpha: G \to G'$ be a homomorphism of linear algebraic groups. Let $\P$ and $\P'$ be toric principal $G$ and $G'$ bundles over $X_\Sigma$ respectively. {Let $F: \P \to \P'$ be a morphism of principal bundles that is equivariant with respect to $\alpha$.} If $\Phi$ is the piecewise linear map corresponding to $\P$, we wish to show that $\P'$ corresponds to the piecewise linear map $\hat{\alpha} \circ \Phi$. Let $\rho \in \Sigma(1)$ be a ray. On the open affine chart $X_\rho$, the bundle $\P$ (respectively $\P'$) can be identified with $X_\rho \times G$ (respectively $X_\rho \times G'$) and the action of $T$ is given by a homomorphism $\phi = \phi_\rho: T \to G$ (respectively $\phi' = \phi_{\rho'}: T \to G'$). Moreover, the distinguished point $p_0 \in \P_{x_0}$ (respectively $p'_0 \in \P'_{x_0}$) is identified with $(x_0, 1_G)$ (respectively $(x_0, 1_{G'})$). Here $1_G$ and $1_{G'}$ denote the identity elements of $G$ and $G'$ respectively.
Since $F$ is a bundle map, there is $f: X_\rho \times G \to G'$ such that $F: \P_{|X_\rho} \to \P'_{|X_\rho}$ is given by $(x, g) \mapsto (x, f(x, g))$. The assumption that $F$ sends the distinguished point $p_0$ to the distinguished point $p'_0$ means that $f(x_0, 1_G) = 1_{G'}$. The equivariance condition of $F: \P_{|X_\rho} \to \P'_{|X_\rho}$ then implies that, for any $t \in T$ and $g \in G$ we have:
\begin{equation}   \label{equ-f-equiv}
	f(t \cdot x_0, \phi(t)g) = \phi'(t) f(x_0, 1_G) \alpha(g)=\phi'(t) \alpha(g).
\end{equation}
Letting $g=\phi(t)^{-1}$ we conclude:
\begin{equation} \label{equ-f-equiv-X_0}
	f(t \cdot x_0, 1_G) = \phi'(t) \alpha(\phi(t))^{-1}.
\end{equation}
Since $f$ is regular on the whole $X_\rho \times G$ we see that the limit of $\phi'(t) \alpha(\phi(t))^{-1}$ exists in $G'$ as $t \cdot x_0 \to x_\rho$. This means that $\alpha \circ \phi \circ \lambda_\rho$ and $\phi' \circ \lambda_\rho$ are equivalent one-parameter subgroups of $G'$, where $\lambda_\rho$ is the one-parameter subgroup of $T$ associated to the primitive vector ${\bf v}_\rho$. It follows that $\Phi' = \hat{\alpha} \circ \Phi$. This finishes the proof.   

{\bf Step 6:} Finally, we need to show the maps $(\alpha, \Phi) \mapsto (\alpha, F)$ and $(\alpha, F) \mapsto (\alpha_*: \Phi \to \Phi')$ are inverses of each other. This follows from  \eqref{equ-f-equiv-X_0} and the fact that $\Phi' = \hat{\alpha} \circ \Phi$.
\end{proof}

\begin{remark}  \label{rem-morphisms-vb-vs-pb}
We point out that morphisms of vector bundles do not correspond to morphisms of principal bundles for general linear groups. 
This is because a linear map between vector spaces $E$ and $E'$ does not correspond to a group homomorphism between $\GL(E)$ and $\GL(E')$. Thus, our description of morphisms of (framed) toric principal bundles, in case of general linear groups, is different from Klyachko's description of morphisms of toric vector bundles. Morphisms between (framed) toric principal bundles for general linear groups are much more restricted than morphisms between toric vector bundles. When $G=G'=\GL(r)$ and $\alpha: \GL(r) \to \GL(r)$ is the identity, the morphisms of toric principal bundles on $X_\Sigma$ correspond to the isomorphisms of rank $r$ toric vector bundles on $X_\Sigma$.  
\end{remark}

In the recent paper \cite[Section 5]{BDDKP}, a description of morphisms for the category of (non-framed) toric principal $G$-bundles is given. It follows the Kaneyama type description of toric principal bundles in \cite{Biswas, Biswas-Serre} and is in terms of certain subsets of the group $G$ indexed by the cones in the fan and satisfying certain conditions. The morphisms we consider and those considered in \cite{BDDKP} differ in the following ways: firstly in Theorem \ref{th-principal-G-bundle-plm} we consider morphisms between principal bundle for possibly different groups $G$ and $G'$ {and equivariant with respect to} a homomorphism $\alpha: G \to G'$. On the other hand, \cite{BDDKP} fixes a group $G$ and consider the morphisms between toric $G$-principal bundles. That is, for them $G=G'$ and $\alpha$ is the identity. Secondly, we consider the morphisms of \textit{framed} toric principal bundles while \cite{BDDKP} do not fix a frame (at the distinguished point $x_0$). Hence the set of morphisms they consider is larger than ours. 
For example, the automorphism group of a framed toric principal bundle (with respect to the identity homomorphism) is trivial, while the automorphism group of a non-framed toric principal bundle is in general an intersection of certain parabolic subgroups (see \cite[Theorem 5.5]{BDDKP}). 
\begin{example}[Toric vector bundles]
\label{sec-toric-vb-PLM}
Rank $r$ vector bundles correspond to principal $\GL(r)$-bundles. Thus, Theorem \ref{th-principal-G-bundle-plm} applied to $G = \GL(r, \k)$ gives a classification of toric rank $r$ vector bundles. In fact, this exactly recovers Klyachko's classification. 
As in Section \ref{sec-prelim-toric-vb} let $\E$ be a toric vector bundle on a toric variety $X_\Sigma$. Fix a point $x_0$ in the open orbit $X_0 \subset X_\Sigma$ and let $E = \E_{x_0}$. In Klyachko's classification, for each ray $\rho \in \Sigma(1)$ we have a decreasing filtration $(E^\rho_i)_{i \in \Z}$ in $E$. Any such filtration defines a labeled flag in $E$ {(or equivalently an equivalence class of one-parameter subgroups of $\GL(E)$)}, see Remark \ref{rem-bldg-GL(r)-filtrations}. The compatibility condition (in Theorem \ref{th-Klyacjko}) translates to the condition that the labeled flags associated to the rays in the fan define a piecewise linear map $\Phi: |\Sigma| \to \tilde{\B}(\GL(E))$.


\end{example}

\begin{example}[Toric line bundles]   \label{ex-tlb}
In particular, let $\L$ be a toric line bundle on a toric variety $X_\Sigma$. Let $D = \sum_{\rho \in \Sigma(1)} a_\rho D_\rho$ be the torus invariant Cartier divisor corresponding to $\L$. The cone over the Tits building $\tilde{\B}(\GL(E)) = \tilde{\B}(\G_m)$ consists of one apartment which we identify with $\R$. Thus in this case, a piecewise linear map into $\tilde{\B}(\GL(E))$ is just a usual piecewise linear function. The piecewise linear function $\Phi$ corresponding to $\L$ is given by $\Phi(\v_\rho) = a_\rho$, $\forall \rho \in \Sigma(1)$. Recall that $\v_\rho \in N$ denotes the primitive vector along $\rho$.
We point out that several authors, for example \cite{Fulton, CLS} define the piecewise linear function associated to the divisor $D$ by $\Phi(\v_\rho) = -a_\rho$. This corresponds to taking increasing filtrations instead of decreasing filtrations in Klyachko's construction.
\end{example}

\begin{example}[Toric symplectic and orthogonal principal bundles]
\label{ex-orth-symp-bundle}
Let $\Sigma$ be a fan with the corresponding toric variety $X_\Sigma$.  
Let $G = \operatorname{O}(2r)$ or $\operatorname{Sp}(2r)$. In Example \ref{ex-building-orth-symp} we gave a description of $\tilde{\B}(G)$ in terms of isotropic flags. As an immediate corollary of Theorem \ref{th-principal-G-bundle-plm}, we obtain that the isomorphism classes of toric principal $G$-bundles on $X_\Sigma$ are in one-to-one correspondence with collections $\{ (F_{\rho, \bullet}, c_{\rho, \bullet}) \mid \rho \in \Sigma(1) \}$ of integral labeled isotropic flags (or equivalently filtrations by isotropic subspaces) that satisfy the following compatibility condition: for each cone $\sigma \in \Sigma$, there exists a normal frame 
$L_\sigma = \{L_{\sigma, 1}, \ldots, L_{\sigma, 2r} \}$ and a $\Z$-linear map $\Phi_\sigma: (\sigma \cap N) \to \Z^r$ such that for each ray $\rho \in \sigma(1)$ the labeled isotropic flag associated to $(L_\sigma, \Phi_\sigma(\v_\rho))$ coincides with $(F_{\rho, \bullet}, c_{\rho, \bullet})$. 

The case of $G = \operatorname{O}(2r+1)$ can be treated in a similar fashion.
\end{example}


\section{Characteristic classes}  \label{sec-char-class}
Finally, we describe the characteristic classes of a toric principal bundle in terms of its corresponding piecewise linear map. Extending the notion of Chern classes of a vector bundle, the characteristic classes of a principal bundle are given by the Chern-Weil homomorphism. Below we recall the equivariant Chern-Weil homomorphism and see how for a toric principal bundle it can be immediately recovered from the piecewise linear map associated to the bundle. 

First we consider the topological setting, namely when the base field is $\C$. Let $G$ be a complex  linear algebraic group with $T$ a maximal torus.
Let $BT$, $BG$ denote classifying spaces of $T$ and $G$ with $ET \to BT$ and $EG \to BG$ the corresponding universal bundles respectively. Note that $BT$ and $BG$ are topological spaces and are unique only up to homotopy. Recall that the bundle $EG \to BG$ has the universal property that for any $G$-bundle $\P \to X$ there exists a continuous map $f: X \to BG$ such that $f^*EG$ is isomorphic to $\P$.

Let $\P$ be a $T$-equivariant principal $G$-bundle on a $T$-variety $X$ (this means the action of $T$ on $\P$ lifts that of $X$ and commutes with the action of $G$ on $\P$). 
Consider $\P_T := \P \times_T ET$ and $X_T := X \times_T ET$. Recall that the equivariant cohomology $H^*_T(X)$ is the usual cohomology of $X_T$ (here cohomology is taken with coefficients in $\k = \C$). Then $\P_T$ is a principal $G$-bundle over $X_T$ and hence gives a map $f: X_T \to BG$. This then induces a homomorphism $f^*: H^*(BG) \to H^*(X_T) = H_T^*(X)$. When $G$ is reductive, the cohomology ring $H^*(BG)$ can be identified with $S(\mathfrak{g}^*)^G$, the $\C$-algebra of $\Ad_G$-invariant polynomials on the Lie algebra $\mathfrak{g} = \Lie(G)$. 
Alternatively, fix a maximal torus $H$ in $G$. Let $S(\mathfrak{h}^*)$ denote the $\C$-algebra of polynomials on the Lie algebra $\mathfrak{h}=\Lie(H)$. The inclusion $\mathfrak{h} \subset \mathfrak{g}$ induces an isomorphism of $S(\mathfrak{g}^*)^G$ with the algebra of Weyl group invariants $S(\mathfrak{h}^*)^W$ (this is sometimes known as the Chevalley restriction theorem). Hence $H^*(BG)$ can also be identified with $S(\mathfrak{h}^*)^W$. The equivariant Chern-Weil homomorphism is the homomorphism $$f^*: S(\mathfrak{g}^*)^G = S(\mathfrak{h}^*)^W \to H^*_T(X).$$

Next we consider the case where the base field is any algebraically closed field. In this case for the cohomology theory we take the Chow cohomology and consider characteristic classes in the Chow cohomology groups. 
Unfortunately there is no universal principal $G$-bundle in the category of algebraic varieties or schemes. This can be remedied by taking algebraic approximations to $EG \to BG$. Let $G$ be a linear algebraic group over $\k$. For any integer $m>0$ let $V_m$ be a finite dimensional $G$-module over $\k$ such that $G$ acts freely on a $G$-invariant open subset $U_m \subset V_m$ of codimension $> m$. Moreover, suppose that the geometric quotient $U_m/G$ exists. Then the groups $A^i(U_m/G)$, for degrees less than or equal to $m$, are independent (in a canonical way) of the representation $V_m$ and the open subset $U_m$ (see \cite[Theorem 1.1]{Totaro}).

In analogy with universal bundles we denote $U_m$ by $E_mG$ and its quotient $U_m/G$ by $B_mG$. Clearly, $E_mG \to B_mG$ is a principal $G$-bundle. It is an algebraic approximation of the universal principal bundle $EG \to BG$. Following \cite{Totaro, Edidin-Graham}, for $i \leq m$, the $i$-th $T$-equivariant Chow group of $X$ is defined to be $A^i_G(X) = A^i(X \times_G E_mG)$. The definition is well-defined i.e. independent of the choice the $G$-module $V_m$ and open set $U_m$.

As above fix a maximal torus $H \subset G$ and let $W$ be the Weyl group of $(G, H)$. Let $S_\R$ denote the $\R$-algebra generated by the character lattice of $H$. It is an important result that $A_G^*(\textup{pt})_\R$ is naturally isomorphic to the $\R$-algebra of $W$-invariants $S_\R^W$ (see \cite[Theorem 1(c)]{EG97}). 

We have the following universal property (see \cite[Lemma 1.6]{Totaro}):
\begin{lemma}
Let $\P \to X$ be a principal $G$-bundle. Then there is an affine space bundle $\pi:X' \to X$ and a map $f: X' \to B_mG$ such that the pullbacks $\pi^*\P$ and $f^*E_mG$ are isomorphic. 
\end{lemma}

We can use the above to define $T$-equivariant Chern-Weil homomorphism for Chow cohomology rings as follows. Let $P \to X$ be a $T$-equivariant principal $G$-bundle. As before, for $m>0$ sufficiently large, let $X_T:= X \times_T E_mT $ and $P_T:= \P \times_T E_mT$. Then $P_T \to X_T$ is a principal $T$-bundle and hence we can find $f: X' \to B_mG$ such that $f^*E_mG \cong \pi^*\P$.
Also we note that since $\pi: X' \to X$ is an affine space bundle, $\pi^*: A^*(X)_\R \to A^*(X')_\R$ is an isomorphism. The Chern-Weil homomorphism, for $i < m$, is given by:
$$S_\R^W \to A^i(X')_\R \cong A^i(X)_\R.$$ 


\begin{remark}
Alternatively, there is an algebraic universal principal $G$-bundle in the $2$-category of stacks. As $EG$ one takes the stack of a point $\textup{pt}$ equipped with a $G$-action and $BG$ the quotient stack $\textup{pt} / G$. One then defines the equivariant Chern-Weil homomorphism into Chow cohomology as in the topological setting explained above.
\end{remark}

We recall that the equivariant Chow cohomology ring $A^*_T(X_\Sigma)$ of a complete toric variety $X_\Sigma$ is naturally isomorphic to the algebra of piecewise polynomial functions on $N_\R$ with respect to the fan $\Sigma$ (see \cite{Payne-Chow-coh-toric}). The isomorphism is given by the localization map: 
$$A^*_T(X_\Sigma) \to \bigoplus_{\sigma \in \Sigma(n)} A^*_T(\{x_\sigma\})) \cong \bigoplus_{\sigma \in \Sigma(n)} S(T)_\R.$$ Here the map $A^*_T(X_\Sigma) \to A^*_T(\{x_\sigma\})$ is induced by the inclusion $\{x_\sigma\} \hookrightarrow X_\Sigma$, and $S(T)_\R$ denotes the $\R$-algebra generated by the character lattice of $T$.  

A $W$-invariant element $p \in S^W_\R$ defines a polynomial function on the cocharacter lattice of the maximal torus $H$. We note that every one-parameter subgroup of $G$ is conjugate to a one-parameter subgroup of the maximal torus $H$. By requiring that $p$ is conjugation invariant, we can extend it to a well-defined function on the whole $\tilde{\B}(G)$. That is, for one-parameter subgroup $\lambda: \G_m \to G$ we define $p(\lambda) = p(g \lambda g^{-1})$ where $g \in G$ is such that $g \lambda g^{-1}: \G_m \to H$. That this is well-defined is a consequence of the following standard lemma. We include a proof which is taken from \cite{mathoverflow}. 
\begin{lemma}
Let $\lambda$, $\lambda' \in \Lambda^\vee(H)$ be two one-parameter subgroups of a maximal torus $H$. Suppose there exists $g \in G$ such that $g \lambda g^{-1} = \lambda'$. Then there is $w \in W$ such that $w \lambda w^{-1} = \lambda'$.
\end{lemma}
\begin{proof}
Under the assumptions of the lemma, $g H g^{-1}$ centralizes the image of $\lambda'$. Thus $H$ and $g H g^{-1}$ are maximal tori in the centralizer of the image of $\lambda'$ and hence there is $h$ in this centralizer such that $g H g^{-1} = h H h^{-1}$. Now take $w$ to be the Weyl group element represented by $h^{-1}g \in N_G(H)$. 
\end{proof}

\begin{theorem}[Characteristic classes]   \label{th-Chern-Weil-toric-principal-bundle}
Let $\P$ be a toric principal $G$-bundle on a complete toric variety $X_\Sigma$ over $\k$ with the corresponding piecewise linear map $\Phi: |\Sigma| \to \tilde{\B}(G)$. Let $p \in S^W_\R$ be a $W$-invariant polynomial. Then the image of $p$ under the equivariant Chern-Weil homomorphism is given by the piecewise polynomial function $p \circ \Phi$.
\end{theorem}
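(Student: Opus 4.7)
The plan is to reduce the computation to the $T$-fixed points via localization, use the explicit equivariant trivialization of $\P$ over each affine chart (Lemma \ref{lem-toric-G-vec-bundle-trivial} and the construction in Step 1 of the proof of Theorem \ref{th-principal-G-bundle-plm}), and then invoke naturality of the Chern-Weil homomorphism along the subgroup homomorphisms $\phi_\sigma: T \to G$.

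First, since $X_\Sigma$ is smooth and complete, the localization map identifies $H^*_T(X_\Sigma)$ with the subring of $\bigoplus_{\sigma \in \Sigma(n)} S(\mathfrak{t}^*)$ consisting of tuples that assemble into a piecewise polynomial function on $\Sigma$. Hence it suffices to prove, for each maximal cone $\sigma$, that the restriction $f^*(p)|_{x_\sigma}$ equals the polynomial $p \circ \Phi|_\sigma \in S(\mathfrak{t}^*)$.

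Second, over the chart $X_\sigma$ we have a $T$-equivariant trivialization $\P|_{X_\sigma} \cong X_\sigma \times G$, where the $T$-action on the $G$-factor is left multiplication through $\phi_\sigma: T \to H_\sigma \subset G$; here $\phi_\sigma$ is the homomorphism whose derivative $d\phi_\sigma: \mathfrak{t} \to \Lie(H_\sigma) \subset \mathfrak{g}$ is the linear extension of $\Phi|_\sigma: \sigma \to A_{H_\sigma}$. Taking the Borel construction at the fiber over $x_\sigma$, the restriction of $\P_T$ to $(x_\sigma)_T \cong BT$ is the principal $G$-bundle $(G \times ET)/T$, which is precisely the pullback of the universal $G$-bundle $EG \to BG$ along the classifying map $B\phi_\sigma: BT \to BG$. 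Therefore the composition $BT = (x_\sigma)_T \hookrightarrow X_T \xrightarrow{f} BG$ is homotopic to $B\phi_\sigma$.

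Third, naturality of the Chern-Weil isomorphism $H^*(BG) \cong S(\mathfrak{g}^*)^G$ (and $H^*(BT) \cong S(\mathfrak{t}^*)$) implies that the induced map $(B\phi_\sigma)^*$ corresponds to precomposition by $d\phi_\sigma$. Combining these,
\[
f^*(p)|_{x_\sigma} \;=\; (B\phi_\sigma)^*(p) \;=\; p \circ d\phi_\sigma \;=\; p \circ \Phi|_\sigma,
\]
which, after gluing across all $\sigma \in \Sigma(n)$, yields the piecewise polynomial function $p \circ \Phi$. The main delicate point is the naturality used in the last step: one needs that the Chern-Weil identification intertwines $(B\phi)^*$ with precomposition by $d\phi$ for an arbitrary homomorphism $\phi: T \to G$ from a torus. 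For reductive $G$ over $\C$ this is standard; for a general linear algebraic $G$ it reduces to the reductive case since the building, the ring of invariants, and the relevant classifying-space cohomology are all insensitive to the solvable radical $R(G)$ (cf.\ Remark \ref{rem-semisimple-quotient}). Finally, the well-definedness of $p$ as a function on $\B(G)$ poses no issue on each cone $\sigma$, since $\Phi(\sigma) \subset A_{H_\sigma}$ sits in a single apartment, while the $\text{Ad}_G$-invariance of $p$ guarantees compatibility across apartments when cones are glued.
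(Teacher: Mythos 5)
Your proposal is correct and takes essentially the same route as the paper's proof: localize at the fixed points $x_\sigma$, observe that the restriction of the classifying map to $(x_\sigma)_T \cong BT$ is $B\phi_\sigma$ for the homomorphism $\phi_\sigma: T \to G$ giving the equivariant trivialization over $X_\sigma$, and use naturality of the Chern--Weil identification so that $(B\phi_\sigma)^*$ is precomposition with $d\phi_\sigma$, which is exactly $\Phi|_\sigma$. The paper compresses the middle step into ``one verifies''; your write-up supplies that verification (and flags the reductive-versus-general $G$ point, which the paper also glosses over), so no substantive difference in approach.
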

\begin{proof}
We need to show that for any maximal cone $\sigma \in \Sigma(n)$ the restriction of image of $p$ to the fixed point $x_\sigma$ coincides with $(p \circ \Phi)_{|\sigma}$. The fixed point $x_\sigma$ gives a section $B_mT \to X_{m,T}:=X \times_T E_mT$ which induces a homomorphism $A^i_T(X)_\R \to A^i(B_mT)_\R \cong S(T)_{\R, i}$, for $i < m$. We thus obtain a homomorphism: $$S^W_{\R, i} \cong A^i(B_mG)_\R \to A^i_T(X)_\R \to A^i(B_mT)_\R \cong S(T)_{\R, i}.$$ Recall that the action of $T$ on $\P_{|X_\sigma}$ is given by a homomorphism $\phi_\sigma: T \to \P_{x_0} \cong G$. 
One verifies that this induces the above homomorphism $S^W_{\R, i} \to S(T)_{\R, i}$. This finishes the proof.
\end{proof}

Let us consider the case of $G=\GL(r)$ and toric vector bundles. One can naturally define elementary symmetric functions on $\tilde{\B}(\GL(E))$ as we now describe. For $1 \leq i \leq r$ {let $\epsilon_i: \R^r \to \R$ denote the $i$-th elementary symmetric polynomial}, i.e.
$$\epsilon_i(x_1, \ldots, x_r) = \sum_{1\leq j_1 < \cdots < j_i \leq r} x_{j_1} \cdots x_{j_i}.$$
Each $\epsilon_i$ induces a well-defined real-valued function on $\tilde{\B}(\GL(r))$ which we again denote by $\epsilon_i$. 
Let us regard an element of $\tilde{\B}(\GL(r))$ as a labeled flag $(F_\bullet, c_\bullet)$, where $F_\bullet = (\{0\} = F_0 \subsetneqq F_1 \subsetneqq \cdots \subsetneqq F_k = \k^r)$ and $c_\bullet = (c_1 > \cdots > c_k)$ (see Example \ref{ex-building-GL_r}). Then the value of $\epsilon_i$ on this element is equal to the $i$-th elementary symmetric function on $c_1, \ldots, c_k$ where each $c_i$ is repeated $\dim(F_i / F_{i-1})$ times.
As a special case of Theorem \ref{th-Chern-Weil-toric-principal-bundle} we obtain the following description of equivariant Chern classes of toric vector bundles. This corollary is not new and appears, in a slightly different language, in \cite[Proposition 3.1]{Payne-moduli}.

\begin{corollary}[Equivariant Chern classes]    \label{cor-Chern-class-tvb} 
Let $\E$ be a toric vector bundle over a toric variety $X_\Sigma$ with $\Phi_\E: |\Sigma| \to \tilde{\B}(\GL(E))$ its corresponding piecewise linear map. Then for any $1 \leq i \leq r$, the $i$-th equivariant Chern class $c^T_i(\E)$ is represented by the piecewise polynomial function $\epsilon_i \circ \Phi_\E$.
\end{corollary}


\begin{thebibliography}{99}
\bibitem[AB08]{Abramenko-Brown} Abramenko, P.; Brown, K. S. {\it Buildings, theory and applications}. Grad. Texts in Math. 248, Springer-Verlag, New York, 2008.
\bibitem[BR85]{BR} Bardsley, P.; Richardson, R. W. \textit{\'Etale slices for algebraic transformation groups in characteristic p}, Proc. London Math. Soc. 51 (1985), 295--317.
\bibitem[BDP16]{Biswas} Biswas, I.; Dey, A.; Poddar, M. {\it A classification of equivariant principal bundles over nonsingular toric varieties}. Internat. J. Math. 27 (2016), no. 14, 1650115.
\bibitem[BDP18]{Biswas-Serre} Biswas, I.; Dey, A.; Poddar, M.
{\it On equivariant Serre problem for principal bundles}. Internat. J. Math. 29 (2018), no. 9, 1850054, 7 pp.
\bibitem[BDP20]{Biswas-Tannakian} Biswas, I.; Dey, A.; Poddar, M.
\textit{Tannakian classification of equivariant principal bundles on toric varieties}. 
Transform. Groups 25 (2020), no. 4, 1009--1035.
\bibitem[BDDKP]{BDDKP} Biswas, I.; Dasgupta, J.; Dey, A.; Khan, B.; Poddar, M. \textit{Classification, reduction and stability of toric principal bundles}. arXiv:2012.13540
\bibitem[CLS11]{CLS}
Cox, D. A.; Little, J. B.; Schenck, H. K.
{\it Toric varieties}. Graduate Studies in Mathematics, 124. American Mathematical Society, Providence, RI, 2011.
\bibitem[EG97]{EG97} Edidin, D.; Graham, W.
\textit{Characteristic classes in the Chow ring}. 
J. Algebraic Geom. 6 (1997), no. 3, 431--443. 
\bibitem[EG98]{Edidin-Graham} 
Edidin, D.; Graham, W. \textit{Equivariant intersection theory}. Invent. Math. 131 (1998), no. 3, 595--634. 
\bibitem[Fulton93]{Fulton} Fulton, W. {\it Introduction to Toric Varieties}. Annals of Math. Studies 131, Princeton Univ. Press, Princeton, NJ, 1993.
\bibitem[Hartshorne77]{Hartshorne} Hartshorne, R.
{\it Algebraic Geometry}, Grad. Texts in Math. 52, Springer-Verlag, New York, 1977.
\bibitem[Ji12]{Ji} Ji, L. {\it Buildings and their applications in geometry and topology}. Differential geometry, 89–210, Adv. Lect. Math. (ALM), 22, Int. Press, Somerville, MA, 2012.
\bibitem[JSY07]{JSY} Joswig, M.; Sturmfels, B.; Yu, J. {\it Affine buildings and tropical convexity}. Albanian J. Math. 1 (2007), no. 4, 187--211.
\bibitem[Kaneyama75]{Kaneyama}
Kaneyama, T. {\it On equivariant vector bundles on an almost homogeneous variety}. Nagoya Math. J. 57 (1975), 65--86.
\bibitem[KKh12]{KKh-Annals} Kaveh, K.; Khovanskii, A. G. {\it Newton-Okounkov bodies, semigroups of integral points, graded algebras and intersection theory}. Ann. of Math. (2) 176 (2012), no. 2, 925--978.
\bibitem[KM]{KM-PL} Kaveh, K.; Manon, C. \textit{Toric flat families, valuations, and tropical geometry over the semifield of piecewise linear functions}. arXiv:1907.00543.
\bibitem[Klyachko89]{Klyachko} Klyachko, A. A. {\it Equivariant bundles on toral varieties}. (Russian) Izv. Akad. Nauk SSSR Ser. Mat. 53 (1989), no. 5, 1001--1039, 1135; translation in Math. USSR-Izv. 35 (1990), no. 2, 337--375.
\bibitem[Mathoverflow]{mathoverflow} 
{\sf https://mathoverflow.net/questions/29073/conjugate-cocharacters-in-a-maximal-torus}
\bibitem[Milne17]{Milne} 
Milne, J. S. {\it Algebraic groups. 
The theory of group schemes of finite type over a field}. Cambridge Studies in Advanced Mathematics, 170. Cambridge University Press, Cambridge, 2017.
\bibitem[MFK94]{Mumford} 
Mumford, D.; Fogarty, J.; Kirwan, F. {\it Geometric invariant theory}. Third edition. Ergebnisse der Mathematik und ihrer Grenzgebiete (2) [Results in Mathematics and Related Areas (2)], 34. Springer-Verlag, Berlin, 1994.
\bibitem[Payne06]{Payne-Chow-coh-toric} Payne, S. {\it Equivariant Chow cohomology of toric varieties}. Math. Res. Lett. 13 (2006), no. 1, 29--41.
\bibitem[Payne08]{Payne-moduli}
Payne, S. {\it Moduli of toric vector bundles}. Compos. Math. 144 (2008), no. 5, 1199--1213.
\bibitem[Payne09]{Payne-cover} Payne, S. {\it Toric vector bundles, branched covers of fans, and the resolution property}, J. Alg. Geom. 18 (2009), 1--36.
\bibitem[RTW15]{RTW} R\'emy, B.; Thuillier, A.; Werner, A. \textit{Bruhat-Tits buildings and analytic geometry}. Berkovich spaces and applications, 141--202, Lecture Notes in Math., 2119, Springer, Cham, 2015.
\bibitem[Totaro97]{Totaro} Totaro, B. \textit{The Chow ring of a classifying space}. Algebraic K-theory (Seattle, WA, 1997), 249--281, 
Proc. Sympos. Pure Math., 67, Amer. Math. Soc., Providence, RI, 1999. 
\end{thebibliography}
\end{document}